\theoremstyle{definition}
\def \C{{{\rm I{\!\!\!}\rm C}}}
\numberwithin{equation}{section}
\newtheorem{theorem}{\bf Theorem}[section]
\newtheorem{remark}{\bf Remark}[section]
\newtheorem{lemma}{Lemma}[section]
\newtheorem{corollary}{Corollary}[section]
\newtheorem{example}{Example}[section]
\newtheorem{definition}{Definition}[section]
\newtheoremstyle
    {remarkstyle}
    {}
    {11pt}
    {}
    {}
    {\bfseries}
    {:}
    {     }
    {\thmname{#1} \thmnumber{#2} }
\theoremstyle{remarkstyle}
\begin{document}
\title{Non-homogeneous space-time fractional Poisson processes}
\author{\small A. Maheshwari and P. Vellaisamy }
%}
\address{\small Department of Mathematics,
Indian Institute of Technology Bombay, Powai, Mumbai 400076, INDIA.}
%\address{\small ${}^2$Department of Statistics and Probability, Michigan State %University, East Lansing, MI 48824, USA.}
\email{aditya@math.iitb.ac.in, pv@math.iitb.ac.in}
%\thanks{The research of AM was supported by UGC, Govt. of India grant F. 2-2/98 (SA-1)}
 \subjclass[2010]{60G22; 60G55}
 \keywords{L\'evy subordinator, fractional Poisson process, non-homogeneous Poisson process.}
\date{\today}
\begin{abstract}
 The space-time fractional Poisson process (STFPP), defined by Orsingher and Poilto in \cite{sfpp}, is a generalization of the time fractional Poisson process (TFPP) and the space fractional Poisson process (SFPP). We study the fractional generalization of the non-homogeneous Poisson process and call it the non-homogeneous space-time fractional Poisson process (NSTFPP). We compute their {\it pmf} and generating function and investigate the associated differential equation. The limit theorems and the law of iterated logarithm for the NSTFPP process are studied. We study the distributional properties, the asymptotic expansion of the correlation function of the non-homogeneous time fractional Poisson process (NTFPP) and subsequently investigate the long-range dependence (LRD) property of a special  NTFPP. We investigate the limit theorem and the LRD property for the fractional non-homogeneous Poisson process (FNPP), studied by Leonenko et. al. (2016). Finally, we present some simulated sample paths of the NSTFPP process.
\end{abstract}

\maketitle
\section{Introduction}
The study of Poisson process and its applications has attracted lot of attention among the researchers, scientists and engineers. Recent times witness a growing interest in its fractional version called as the fractional Poisson process (FPP). The non-homogeneous Poisson process (NPP) can be considered as the Poisson process where the time variable is replaced by rate function $\Lambda(t),~t\geq0$, that is, $\widetilde{N}(t)=N(\Lambda(t),1)$. In this paper, we study a space-time fractional generalization of the NPP. \\\\
 A fractional version of the NPP is recently studied by Leonenko {\it et al.} \cite{fnhpp} defined by time-changing the NPP $\{\widetilde{N}(t)\}_{t\geq0}$ by an inverse $\beta$-stable subordinator. In their paper, they also pointed out an another way of defining the fractional version of the NPP (see \cite[Section 6]{fnhpp}) which is defined by replacing the the time variable of the FPP $\{N_\beta(t)\}_{t\geq 0}$ by the rate function $\Lambda(t),~t\geq0$. It is important to underline that these two processes are different. We expound on the latter definition and study the approach in a general sense in this paper.  \\\\
  In the literature there are two versions of the FPP, namely the time fractional Poisson process (TFPP) (see \cite{lask}) and the space fractional Poisson process (SFPP) (see \cite{sfpp}), which are defined by time-changing the Poisson process by an inverse $\beta$-stable subordinator and a $\beta$-stable subordinator, respectively. These two processes form a special case for the space-time fractional Poisson process (STFPP) which is defined by Orsingher and Poilto (see \cite{sfpp}). We here study the space-time fractional generalization of the NPP called as the non-homogeneous space-time fractional Poisson process (NSTFPP). We also study the fractional non-homogeneous Poisson process (FNPP), investigated by Leonenko et. al. (see \cite{fnhpp}), which is defined by time-changing the NPP with the inverse $\beta$-stable subordinator.\\ %We study the limit theorem and the LRD property of the FNPP process.  \\
  
  We give the {\it pmf} and the generating function for NSTFPP process.  The fractional differential equation for the generating function of the NSTFPP is derived. The limit theorem and the law of iterated logarithm for the NSTFPP process are derived. A representation for the generating function of the NSTFPP is also given. The asymptotic expansion for the correlation function of the non-homogeneous time fractional Poisson process (NTFPP) is discussed. The long-range dependence (LRD) property of a special case for the NTFPP is studied. A limit theorem and the LRD property of the FNPP process are established. Lastly, we present some simulated sample paths for some special NSTFPP process. \\\\
  The paper is organized as follows. In Section \ref{sec:Prelims}, we discuss some preliminaries and definitions which are required for the paper. In Section \ref{sec:NSTFPP}, we investigate the NSTFPP process. Simulations for some special cases for the NSTFPP process are presented in Section \ref{sec:Simulation}.
\section{Preliminaries} \label{sec:Prelims}
\noindent In this section we present some preliminary results which are required later in the paper. Let $\mathbb{Z}_{+}=\{0,1,\ldots,\}$ be the set of nonnegative integers. \\
\noindent   The Mittag-Leffler function $L_{\alpha}(z)$ is defined as (see \cite{Mittag-Leffler-original})
\begin{equation}\label{Mittag-Leffler-function}
L_{\alpha}(z)=\sum\limits_{k=0}^{\infty}\frac{z^{k}}{\Gamma(1+\beta k)},\,\,\,\alpha,z\in \C \text{ and } \Re(\alpha)>0.
\end{equation}
\subsection{Stable and inverse stable subordinator}
Let $\{D_\beta(t)\}_{t\geq0}$ be the $\beta$-stable subordinator (see \cite{appm}) with Laplace transform (LT)
\begin{equation}
	\mathbb{E}[e^{-sD_\beta(t)}]=e^{-ts^\beta}.
\end{equation}
The inverse $\beta$-stable subordinator (see \cite{bingham71,MeerStrak13}) is defined as the right-continuous inverse of the $\beta$-stable subordinator
\begin{equation}
	E_\beta(t)=\inf\{r>0:D(r)>t \},~~t\geq0.
 \end{equation}
 A stochastic process $\{X(t)\}_{t\geq0}$
 is self-similar (see \cite{appm}) with Hurst index $H > 0$ if 
 \begin{equation*}
  X(ct)\stackrel{d}{=}c^HX(t),
 \end{equation*} have the same finite-dimensional distributions for all $c\geq0.$
 It is well known that the $\beta$-stable subordinator is  self-similar with Hurst index $1/\beta$, that is,
 \begin{equation}\label{stable-ss}
 	D_\beta(ct)\stackrel{d}{=}c^{1/\beta}D_\beta(t),~c>0.
 \end{equation}
 Also, it can be seen that (see e.g. \cite{fnbpfp,MeerSche2004}) the inverse $\beta$-stable subordinator is self-similar with Hurst index $\beta$, that is
 \begin{equation}\label{istable-ss}
 E_\beta(ct)\stackrel{d}{=}c^{\beta}E_\beta(t),~c>0.
 \end{equation}
 \subsection{The LRD property}
 \noindent There are several definitions in the literature for the LRD and the short-range dependence (SRD) property of a stochastic process. We now present our definition (see \cite{ovi-lrd,lrd2016}) which will be used in this paper.
 %\noindent We next present an alternate definition of the LRD and the SRD property .
 \begin{definition}\label{LRD-definition}
 	Let $s>0$ be fixed and $t>s$. Suppose a stochastic process $\{X(t)\}_{t\geq0}$ has the correlation function Corr$[X(s),X(t)]$ that satisfies
 	\begin{equation}\label{LRD-defn1}
 	c_1(s)t^{-d}\leq\text{Corr}[X(s),X(t)]\leq c_2(s)t^{-d},
 	\end{equation}
 	for large $t$, $d>0$, $c_1(s)>0$ and $c_2(s)>0$. In other words, 
 	\begin{equation}\label{LRD-defn2}
 	\lim\limits_{t\to\infty}\frac{\text{Corr}[X(s),X(t)]}{t^{-d}}=c(s),
 	\end{equation}for some $c(s)>0$ and $d>0.$  We say $\{X(t)\}_{t\geq0}$ has the LRD property if $d\in(0,1)$  and has the SRD property if $d\in(1,2)$.
 \end{definition}
 Note that \eqref{LRD-defn1} and \eqref{LRD-defn2} are equivalent and imply that Corr$[X(s),X(t)]$ behaves like $t^{-d}$, for large $t$.

\section{Non-homogeneous space-time fractional Poisson process}\label{sec:NSTFPP}
 The space-time fractional Poisson process (STFPP) is defined in Orsingher and Polito (see \cite[Remark 2.4]{sfpp}). The STFPP generalizes both the time and the space fractional Poisson process.
 \begin{definition}[Space-time fractional Poisson process]
 	\noindent Let $0<\alpha,\beta\leq1$. The space-time fractional Poisson process (STFPP) $\{N^\alpha_{\beta}(t,\lambda)\}_{t\geq0}$, which is a generalization of the Poisson process $\{N(t,\lambda)\}_{t\geq0}$, is defined to be a stochastic process for which $p^\alpha_{_\beta}(n|t,\lambda)=\mathbb{P}[N^\alpha_{\beta}(t,\lambda)=n]$ satisfies (see \cite{sfpp})
 	
 	\begin{align}\label{def:stfpp}
 		D^\beta_t p^\alpha_{_{\beta}}(n|t,\lambda) &= -\lambda^\alpha(1-B_n)^\alpha p^\alpha_{_{\beta}}(n|t,\lambda),~\text{for }n\geq1,\\
 		D^{\beta}_{t}p^\alpha_{_{\beta}}(0|t,\lambda) &= -\lambda p^\alpha_{_{\beta}}(0|t,\lambda)\nonumber
 	\end{align}
 	$\text{with  }p^\alpha_{_{\beta}}(n|0,\lambda)=1\text{ if }n=0 \text{ and is zero if }n\geq1.$  Here, $D^{\beta}_{t}$ denotes the fractional derivative in the Caputo sense and is defined as
 	\begin{equation}\label{cd}
 		D_{t}^{\beta}f(t)= \begin{cases} 
 			\hfill \dfrac{1}{\Gamma(1-\beta)}\displaystyle\int\limits_{0}^{t}\frac{f'(s)}{(t-s)^{\beta}}ds, \hfill    &0<\beta<1 , \\ 
 			f'(t), \,\,\,\,\,\,\,\,\,\,\, \beta=1,&
 		\end{cases}
 	\end{equation}
 	where $f'$ denotes the derivative of $f.$  
 \end{definition}
 \noindent The {\it pmf} $p^\alpha_{_{\beta}}(n|t,\lambda)$ of the STFPP is given by (see \cite[eq. (2.29)]{sfpp}) 
 \begin{equation}\label{pmf:stfppd}
 	p^\alpha_{_{\beta}}(n|t,\lambda)=\frac{(-1)^n}{n!} \sum_{k=0}^\infty \frac{(-\lambda^\alpha t^\beta)^k}{
 		\Gamma(\beta k+1)} \frac{\Gamma(\alpha k+1)}{\Gamma(\alpha k + 1- n)}, ~~n \geq 0, \: \alpha,\beta \in
 	(0,1].
 \end{equation}	
 The probability generating function ({\it pgf}) of the STFPP is given by (see \cite[eq. (2.28)]{sfpp})
 \begin{align}\label{pgf-stfpp}
 	G^{\alpha,\beta}_\lambda(s,t) = L_\beta(-\lambda^\alpha t^\beta (1-s)^\alpha),~|s| \leq 1.	\end{align}
 %\end{definition}
 The fractional differential equation governing the
 {\it pgf} of the STFPP is given by (see \cite[eq. (2.27)]{sfpp})
 \begin{align*}
 	D_t^\beta G^{\alpha,\beta}_\lambda(s,t) = -\lambda^\alpha(1-u)^\alpha
 	G^{\alpha,\beta}_\lambda(s,t) , ~~ |u| \leq 1, \: \alpha,\beta \in (0,1]
 \end{align*}
 with $G^{\alpha,\beta}_\lambda(s,0) = 1.$ 	
 
 A different characterization of the STFPP is to subordinate the Poisson process $\{N(t)\}_{t\geq0}$ by an independent $\alpha$-stable subordinator $\{D_\alpha(t)\}_{t\geq0}$ and then by the inverse $\beta$-stable subordinator $\{E_\beta(t)\}_{t\geq0}$
 \begin{equation}\label{stfpp-subord}
 	N_\beta^\alpha(t,\lambda)\stackrel{d}{=}N(D_\alpha(E_\beta(t)),\lambda),~t\geq0.
 \end{equation}
 To see this, let us compute the {\it pgf} of the $N(D_\alpha(E_\beta(t)),\lambda)$
 \begin{align*}
 	\mathbb{E}\left[s^{N(D_\alpha(E_\beta(t)),\lambda)}\right]&=\mathbb{E}\left[\mathbb{E}\left[s^{N(D_\alpha(E_\beta(t)),\lambda)}\big|E_\beta(t)\right]\right]=\mathbb{E}\left[e^{-(1-s)^\alpha\lambda^\alpha E_\beta(t)}\right]=L_\beta(-\lambda^\alpha(1-s)^{\alpha}t^\beta),
 \end{align*}
 which coincides with the {\it pgf} of the STFPP given in \eqref{pgf-stfpp}.
 \subsection*{Special cases}
 The STFPP reduces to the time fractional Poisson process (TFPP) (see \cite{lask}) and space fractional Poisson process (SFPP) (see \cite{sfpp}) when taking $\alpha=1$ and $\beta=1$, respectively in \eqref{def:stfpp}. 
 %Observe that \eqref{tfpp-subord} and \eqref{sfpp-subord} gives the stochastic representation of the TFPP and SFPP, respectively. 
 \begin{remark}
 For the comparison of the stochastic representation of the STFPP, given in \eqref{stfpp-subord}, with that of the TFPP (see \cite{mnv}) and the SFPP (see \cite{sfpp}), we assume $E_1(t)=D_1(t)=t$, {\it a.s.}
  \end{remark}
\noindent We now define the non-homogeneous version of the STFPP. 
\begin{definition}\label{def:NSTFPP}
	The non-homogeneous space-time fractional Poisson process (NSTFPP) is defined as 
	\begin{equation}\label{def:ntsfpp}
	W^\alpha_\beta(t)=N^\alpha_\beta(\Lambda(t),1),~t\geq0,
	\end{equation}
	where $\{N^\alpha_\beta(t,\lambda)\}_{t\geq0}$ is the STFPP and $\Lambda(t)=\int_{0}^{t}\lambda(u)du$ is the rate function with intensity function $\lambda(u),u\geq0.$
\end{definition}

 In view of \eqref{stfpp-subord}, the NSTFPP $\{W^\alpha_\beta(t)\}_{t\geq0}$ can also be seen as 
\begin{equation}\label{nstfpp-subord}
W^\alpha_\beta(t)\stackrel{d}{=}N(D_\alpha(E_\beta(\Lambda(t))),1),~t\geq0.
\end{equation}
\begin{remark}
	\noindent Note that when we take $\lambda(u)=\lambda^{\alpha/\beta},u\geq0\Rightarrow\Lambda(t)=\lambda^{\alpha/\beta }t$, the NSTFPP reduces to the STFPP. To see this, observe from \eqref{nstfpp-subord} that
\begin{align*}
W^\alpha_\beta(t)&\stackrel{d}{=}N(D_\alpha(E_\beta(\lambda^{\alpha/\beta}t)),1)\\
&\stackrel{d}{=}N(D_\alpha(\lambda^{\alpha}E_\beta(t)),1)~~~\text{(using \eqref{istable-ss}})\\
&\stackrel{d}{=}N(\lambda D_\alpha(E_\beta(t)),1)~~~\text{(using \eqref{stable-ss}})\\
&\stackrel{d}{=}N( D_\alpha(E_\beta(t)),\lambda)\stackrel{d}{=}N_\beta^\alpha(t,\lambda),
\end{align*}
as stated.
\end{remark}
\noindent The {\it pmf } of the NSTFPP can be directly computed using the {\it pmf } of the STFPP given in \eqref{pmf:stfppd} by replacing $t$ by $\Lambda(t)$ and $\lambda$ by 1 and is given by
\begin{align}\label{nsfpp-pmf}
p^\alpha_{_\beta}(n|\Lambda(t))=\mathbb{P}[W^\alpha_\beta(t)=n]&=\frac{(-1)^n}{n!} \sum_{k=0}^\infty \frac{(-\Lambda^\beta(t))^k}{
	\Gamma(\beta k+1)} \frac{\Gamma(\alpha k+1)}{\Gamma(\alpha k + 1- n)}, ~~n \geq 0.
\end{align}
The NSTFPP process under consideration encapsulates a number of models which may used in applications. In the following examples, we try to list few of them. 
\begin{example}[Weibull distribution]
	The NPP process with Weibull intensity and rate function is given by
	\begin{equation*}
	\lambda(s)=\left(\dfrac{a}{b}\right)\left(\dfrac{s}{b}\right)^{a-1}~\text{and~ }\Lambda(t)=\left(\frac{t}{b}\right)^a,~a,b>0,
	\end{equation*} 
	is used to model the damage process. Damage process can be manifested by accumulation of damage viz. rusting, cracks etc., which eventually leads to failure (see \cite{Weibull-NHPP} and references therein). In \cite{Weibull-NHPP-2}, it is used to model failure times of repairable systems. Its fractional generalization studied in this paper may model this process and can be subject of future study. From \eqref{nstfpp-subord}, the corresponding NSTFPP $\{W^\alpha_\beta(t)\}_{t\geq0}$ is given by 
	\begin{equation*}
W^\alpha_\beta(t)\stackrel{d}{=}N\left(D_\alpha\left(E_\beta\left(t^a/b^a\right)\right),1\right),~a,b>0.
	\end{equation*}
\end{example}

\begin{example}[Gompertz-Makeham distribution]
	Software reliability models are often modeled using the NPP with Gompertz rate (see \cite{NHPP-Gompertz,NHPP-Gompertz-2,NHPP-hazard} and references therein). The Gompertz-Makeham distribution is a generalization of the Gompertz distribution. The NPP associated with Gompertz-Makeham intensity and rate is given by
	\begin{equation*}
	\lambda(s)=ab e^{b s}+\mu,  \text{~~and~~}	\Lambda(t)=a(e^{b t}-1)+\mu t,~~a,b,\mu>0.
	\end{equation*}
	Note that when $\mu=0$, the Gompertz-Makeham intensity (rate) function reduces to Gompertz intensity (rate) function. 
	Software reliability models based on NPP are used to estimate software reliability and also found useful predict software faults, release times, failure rates and etc. The present fractional generalization of this models may find application in this area. Using \eqref{nstfpp-subord}, the corresponding NSTFPP $\{W^\alpha_\beta(t)\}_{t\geq0}$ is given by 
	\begin{equation*}
	W^\alpha_\beta(t)\stackrel{d}{=}N\left(D_\alpha\left(E_\beta\left(a(e^{bt}-1)+\mu t\right)\right),1\right),~a,b,\mu>0.
	\end{equation*}
\end{example}
\begin{example}[Musa-Okumoto model]
	The NPP with Musa-Okumoto intensity can be used as a reliability model in software testing (see \cite{NHPP-hazard} and the references therein). Its intensity and rate function is given by
	\begin{equation*}
	\lambda(s)=\frac{ab}{1+bt},~~\text{and~~}\Lambda(t)=a\ln(1+bt),~a,b>0.
	\end{equation*}
	The fractional generalization of this model may be interest in reliability testing. From \eqref{nstfpp-subord}, the corresponding NSTFPP $\{W^\alpha_\beta(t)\}_{t\geq0}$ is given by 
	\begin{equation*}
	W^\alpha_\beta(t)\stackrel{d}{=}N\left(D_\alpha\left(E_\beta\left(a\ln(1+bt)\right)\right),1\right),~a,b>0.
	\end{equation*}
\end{example}
%The process under consideration includes the rate function $\Lambda(t),t\geq 0$ with the intensity function $\lambda(s),s\geq0.$ 
The following table gives some of the important distributions with the corresponding intensity and rate function.
\renewcommand{\arraystretch}{2.5}% Wider

\begin{table}[H]
	\begin{tabular}{ |c|  c|  c|}
		\hline
		Distribution name & Intensity function $\lambda(s)$ & Rate function  $\Lambda(t)$   \\ \hline
		Weibull distribution
		& $\dfrac{a}{b}\left(\dfrac{s}{b}\right)^{a-1},a,b>0$ & $\left(\dfrac{t}{b}\right)^a $ \\ \hline
	%	&&\\ 
	%	Gompertz distribution
	%	&$ab e^{b s}, a, b>0
		
	%	$&$a(e^{b t}-1)$\\\hline  	&&\\ 
		Gompertz-Makeham's distribution &$a be^{b s}+\mu, a, b,\mu>0
		
		$&$a\left(e^{b t}-1\right)+\mu t$\\ \hline
		
		Musa-Okumoto distribution &$\dfrac{ab}{1+bs}, a, b,\mu>0
		
		$&$a\ln(1+bt)$\\ \hline
	\end{tabular}\vspace*{0.5cm}
	\caption{Examples for intensity and rate functions.}\label{table-example}
\end{table}
We next present the limit theorem for the NSTFPP.
\begin{theorem}
	
	If $\Lambda(t)\to\infty,$ as $t\to\infty$. Then
	\begin{equation}
	\lim\limits_{t\to\infty}\frac{W^\alpha_\beta(t)}{\Lambda^{\beta/\alpha}(t)}= D_\alpha(E_\beta(1)), ~a.s.
	\end{equation}
	\begin{proof}
		Using \eqref{istable-ss}, we have that
		\begin{equation*}
		W^\alpha_\beta(t)=N(D_\alpha(E_{\beta}(\Lambda(t))),1)\stackrel{d}{=}N\left(D_\alpha(\Lambda^{\beta}(t)E_{\beta}(1)),1\right)\stackrel{d}{=}N\left(\Lambda^{\beta/\alpha}(t)D_\alpha(E_\beta(1)),1\right).
		\end{equation*}
		The law of large numbers for the Poisson process implies
		\begin{equation}\label{llnPoisson}
		\lim_{t\rightarrow\infty}\frac{N(t,1)}{t}=1,~{a.s.}
		\end{equation}
		Next, consider
		\begin{align*}
		\lim\limits_{t\to\infty}\frac{W^\alpha_\beta(t)}{\Lambda^{\beta/\alpha}(t)}&=\lim\limits_{t\to\infty}\frac{N^\alpha_\beta(\Lambda(t),1)}{\Lambda^{\beta/\alpha}(t)}=\lim\limits_{t\to\infty}\frac{N(D_\alpha(E_\beta(\Lambda(t))),1)}{\Lambda^{\beta/\alpha}(t)}\\
		&=\lim\limits_{t\to\infty}\frac{N(\Lambda^{\beta/\alpha}(t)D_\alpha(E_\beta(1)),1)}{\Lambda^{\beta/\alpha}(t)}\\
		&=\lim\limits_{t\to\infty}\frac{N(\Lambda^{\beta/\alpha}(t)D_\alpha(E_\beta(1)),1)}{\Lambda^{\beta/\alpha}(t)D_\alpha(E_\beta(1))}\frac{\Lambda^{\beta/\alpha}(t)D_\alpha(E_\beta(1))}{\Lambda^{\beta/\alpha}(t)}\\
		&=\lim\limits_{t\to\infty}\frac{\Lambda^{\beta/\alpha}(t)D_\alpha(E_\beta(1))}{\Lambda^{\beta/\alpha}(t)},~a.s.~~(\text{using }\eqref{llnPoisson})\\
		&=D_\alpha(E_\beta(1)), ~a.s.\qedhere
		\end{align*}
	\end{proof}
\end{theorem}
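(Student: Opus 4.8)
The plan is to reduce the statement to the strong law of large numbers for the homogeneous Poisson process, \eqref{llnPoisson}, after using the subordinated representation \eqref{nstfpp-subord} to strip off the two time changes by self-similarity. Concretely, I would start from $W^\alpha_\beta(t)\stackrel{d}{=}N(D_\alpha(E_\beta(\Lambda(t))),1)$, apply \eqref{istable-ss} with $c=\Lambda(t)$ to get $E_\beta(\Lambda(t))\stackrel{d}{=}\Lambda^\beta(t)E_\beta(1)$, and then \eqref{stable-ss} with $c=\Lambda^\beta(t)$ to get $D_\alpha(\Lambda^\beta(t)E_\beta(1))\stackrel{d}{=}\Lambda^{\beta/\alpha}(t)D_\alpha(E_\beta(1))$. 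Chaining these gives $W^\alpha_\beta(t)\stackrel{d}{=}N\big(\Lambda^{\beta/\alpha}(t)D_\alpha(E_\beta(1)),1\big)$, so it suffices to analyse the right-hand side.

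Next I would factor the normalised variable as
\begin{equation*}
\frac{N\big(\Lambda^{\beta/\alpha}(t)D_\alpha(E_\beta(1)),1\big)}{\Lambda^{\beta/\alpha}(t)}=\frac{N\big(\Lambda^{\beta/\alpha}(t)D_\alpha(E_\beta(1)),1\big)}{\Lambda^{\beta/\alpha}(t)D_\alpha(E_\beta(1))}\cdot D_\alpha(E_\beta(1)).
\end{equation*}
Since $\Lambda(t)\to\infty$ and $D_\alpha(E_\beta(1))>0$ almost surely (a $\beta$-stable subordinator is strictly positive at a positive argument and $E_\beta(1)>0$ a.s.), the argument $\Lambda^{\beta/\alpha}(t)D_\alpha(E_\beta(1))\to\infty$ a.s.; hence the first factor tends to $1$ a.s. by \eqref{llnPoisson}, while the second factor does not depend on $t$. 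This yields the limit $D_\alpha(E_\beta(1))$.

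The only delicate point is bookkeeping about the mode of convergence: \eqref{stable-ss} and \eqref{istable-ss} are equalities in distribution for each fixed $t$, not an almost-sure identification of the two processes in $t$, so the chain of equalities above should be read on the subordinated realisation — i.e. fix one version of $N\big(\Lambda^{\beta/\alpha}(t)D_\alpha(E_\beta(1)),1\big)$ on a common probability space, establish the a.s. limit there, and transfer it back through the equality in distribution with $W^\alpha_\beta(t)/\Lambda^{\beta/\alpha}(t)$ (so that, strictly speaking, the conclusion is convergence in distribution to $D_\alpha(E_\beta(1))$, realised almost surely on that auxiliary space). I expect this to be the main obstacle to a fully rigorous write-up; the remaining ingredients — unboundedness of $E_\beta$, positivity of $D_\alpha$, and the Poisson strong law \eqref{llnPoisson} — are standard.
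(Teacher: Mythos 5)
Your proposal is correct and follows essentially the same route as the paper: rewrite $W^\alpha_\beta(t)$ via the subordinated representation, use \eqref{istable-ss} and \eqref{stable-ss} to pull out the factor $\Lambda^{\beta/\alpha}(t)D_\alpha(E_\beta(1))$, and conclude by the Poisson strong law \eqref{llnPoisson}. The caveat you raise about the self-similarity identities being equalities in distribution for each fixed $t$ rather than pathwise identities — so that the almost-sure limit is really established on the subordinated version and transferred back only in distribution — is a genuine subtlety that the paper's own proof passes over silently, so your write-up is if anything more careful than the original.
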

\noindent We now discuss the law of iterated logarithm (LIL) for the NSTFPP process. First, we have some result and definition which are required to prove the LIL for the NSTFPP.  \begin{definition}
	We call a function $l:(0,\infty)\rightarrow(0,\infty)$ regularly varying at 0+ with index $\alpha\in\mathbb{R}$ (see \cite{bertoin}) if 
	$$\lim_{x\rightarrow 0+}\frac{l(\lambda x)}{l(x)}=\lambda^\alpha,~\text{for}~ \lambda>0.$$
\end{definition}
\noindent   We first give the special case of LIL for the $\beta$-stable subordinator (for general case see Bertoin \cite[Chapter III, Theorem 14]{bertoin}).
%\begin{lemma}Let $X(t)$ be a subordinator with $\mathbb{E}[e^{-sX(t)}]=e^{-tf(s)}$, where $f(s)$ is regularly varying at $0+$ with index $\alpha\in(0,1)$. Let $h$ be the inverse function of $f$ and
%	\begin{equation*}
%	g(t)=\frac{\log\log t}{h(t^{-1}\log\log t)},~(e<t).
%	\end{equation*}
%	Then
%	\begin{equation*}
%	\liminf_{t\to\infty}\frac{X(t)}{g(t)}=\alpha(1-\alpha)^{(1-\alpha)/\alpha},~~a.s.
%	\end{equation*}
%\end{lemma}
%The special case of the above result for the $\beta$-stable subordinator is as follows (see also \cite[Example 3.4]{TCFPP}) 
\begin{lemma}
	Let $\{D_\beta(t)\}_{t\geq0}$ be a $\beta$-stable subordinator with $\mathbb{E}[e^{-sD_\beta(t)}]=e^{-ts^\beta}$, $\beta\in(0,1)$ and
	\begin{equation*}
	g(t)=\frac{\log\log t}{(t^{-1}\log\log t)^{1/\beta}},~(e<t).
	\end{equation*}
	Then
	\begin{equation}\label{LIL-sub}
	\liminf_{t\to\infty}\frac{D_\beta(t)}{g(t)}=\beta(1-\beta)^{(1-\beta)/\beta},~~a.s.
	\end{equation}
\end{lemma}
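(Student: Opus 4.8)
The plan is to obtain \eqref{LIL-sub} as a direct specialization of the general law of the iterated logarithm for subordinators stated in Bertoin \cite[Chapter III, Theorem 14]{bertoin}. In its general form that result applies to a subordinator $\{D(t)\}_{t\ge 0}$ with Laplace exponent $\Phi$, i.e. $\mathbb{E}[e^{-sD(t)}]=e^{-t\Phi(s)}$, under the hypothesis that $\Phi$ is regularly varying at $0+$ with some index $\gamma\in(0,1)$; denoting by $\Phi^{-1}$ the inverse of $\Phi$ and setting
\[
\varphi(t)=\frac{\log\log t}{\Phi^{-1}\!\bigl(t^{-1}\log\log t\bigr)},\qquad (e<t),
\]
it asserts that $\liminf_{t\to\infty} D(t)/\varphi(t)=\gamma(1-\gamma)^{(1-\gamma)/\gamma}$ almost surely. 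It therefore suffices to verify the hypothesis in the stable case and to check that $\varphi$ reduces to the function $g$ appearing in the statement, and that $\gamma$ becomes $\beta$.

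First I would read off the Laplace exponent: from $\mathbb{E}[e^{-sD_\beta(t)}]=e^{-ts^\beta}$ we have $\Phi(s)=s^\beta$. Being a pure power, $\Phi$ is regularly varying both at $0+$ and at $\infty$ with index exactly $\beta\in(0,1)$, so the required hypothesis holds with $\gamma=\beta$; moreover $\Phi$ is an increasing bijection of $(0,\infty)$ onto itself, with inverse $\Phi^{-1}(x)=x^{1/\beta}$, so there is no ambiguity in the (asymptotic) inverse occurring above. Substituting $\Phi^{-1}(x)=x^{1/\beta}$ gives
\[
\varphi(t)=\frac{\log\log t}{\bigl(t^{-1}\log\log t\bigr)^{1/\beta}}=g(t),
\]
which is precisely the normalizing function in the lemma, while the constant becomes $\beta(1-\beta)^{(1-\beta)/\beta}$ upon putting $\gamma=\beta$. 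This is exactly \eqref{LIL-sub}.

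I do not anticipate a genuine obstacle here, since the argument amounts to checking a regular-variation property of a power function and performing one substitution; the only point needing a little care is to invoke the correct ``at infinity'' version of the LIL, which is the one driven by regular variation of $\Phi$ at $0+$, together with its precise normalization. It is worth emphasizing that the one-dimensional self-similarity relation \eqref{stable-ss} is by itself insufficient for this conclusion, because the claim is an almost-sure statement about the whole path of $D_\beta$ rather than about a fixed-time marginal; the real content — an exponential lower-tail estimate for $D_\beta(1)$ near the origin combined with a Borel--Cantelli argument along a geometric sequence of times — is exactly what Bertoin's theorem packages, and that is why I would cite it rather than reprove it.
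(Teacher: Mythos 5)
Your proposal is correct and follows exactly the route the paper intends: the paper states this lemma as a special case of Bertoin's general LIL for subordinators \cite[Chapter III, Theorem 14]{bertoin} and gives no further argument, so your verification that $\Phi(s)=s^\beta$ is regularly varying at $0+$ with index $\beta$, that $\Phi^{-1}(x)=x^{1/\beta}$, and that the normalizing function and constant specialize correctly is precisely the (omitted) content of the paper's proof.
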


\begin{theorem}
	If $\Lambda(t)\to\infty$ as $t\to\infty$. Then 
	\begin{equation}\label{LIL-tcpp}
	\liminf_{t\rightarrow\infty}\frac{W^\alpha_\beta(t)}{g(t)}= \alpha\left(1-\alpha\right)^{(1-\alpha)/\alpha}E_\beta^{1/\alpha}(1)~~{a.s.},
	\end{equation}
	where 
	\begin{equation*}
	g(t)=\frac{\log\log \Lambda^\beta(t)}{((\Lambda^\beta(t))^{-1}\log\log \Lambda^\beta(t))^{1/\alpha}},~(\Lambda(t)>e^{1/\beta}).
	\end{equation*}
	\begin{proof}
		The law of large numbers for the Poisson process implies
		\begin{equation}\label{LIL-Poisson}
		\lim_{t\rightarrow\infty}\frac{N(t,1)}{t}=1,~{a.s.}
		\end{equation} Note that $D_\beta(t)\to\infty$, $a.s.$ as $t\to\infty$ (see  \cite{MeerSche2004}). Consider now,
		\begin{align*}
		\liminf_{t\rightarrow\infty}\frac{W^\alpha_\beta(t)}{g(t)}&=\liminf_{t\rightarrow\infty}\frac{N(D_{\alpha}(E_\beta(\Lambda(t))),1)}{g(t)}\\
		&=\liminf_{t\rightarrow\infty}\frac{N\left(D_{\alpha}(\Lambda^\beta(t)E_\beta(1)),1\right)}{g(t)}=\liminf_{t\rightarrow\infty}\frac{N\left(E^{1/\alpha}_\beta(1)D_{\alpha}(\Lambda^\beta(t)),1\right)}{g(t)}\\
		&=\liminf_{t\rightarrow\infty}\frac{N\left(E^{1/\alpha}_\beta(1)D_{\alpha}(\Lambda^\beta(t)),1\right)}{E^{1/\alpha}_\beta(1)D_{\alpha}(\Lambda^\beta(t))}\frac{E^{1/\alpha}_\beta(1)D_{\alpha}(\Lambda^\beta(t))}{g(t)}\\
		&=E^{1/\alpha}_\beta(1)\cdot \liminf_{t\rightarrow\infty}\frac{D_{\alpha}(\Lambda^\beta(t))}{g(t)},~{a.s.}~~(\text{using }\eqref{LIL-Poisson})\\
		&=E^{1/\alpha}_\beta(1)\liminf_{\Lambda(t)\rightarrow\infty}\frac{D_\alpha(\Lambda^\beta(t))}{g(t)},~{a.s.}\\
		&=\alpha\left(1-\alpha\right)^{(1-\alpha)/\alpha}E^{1/\alpha}_\beta(1)~{a.s.},
		\end{align*}
		where the last step follows from \eqref{LIL-sub}.
	\end{proof}
\end{theorem}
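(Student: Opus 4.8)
The plan is to reduce the statement to the law of iterated logarithm for the $\alpha$-stable subordinator recorded in the preceding Lemma, by peeling off the outer Poisson process and the inverse-stable time change. First I would invoke the subordination representation \eqref{nstfpp-subord}, so that $W^\alpha_\beta(t)\stackrel{d}{=}N(D_\alpha(E_\beta(\Lambda(t))),1)$; then, using the self-similarity \eqref{istable-ss} of the inverse $\beta$-stable subordinator, replace $E_\beta(\Lambda(t))$ by $\Lambda^\beta(t)E_\beta(1)$, and, using the self-similarity \eqref{stable-ss} of the independent $\alpha$-stable subordinator, pull the factor $E_\beta(1)$ out as $E_\beta^{1/\alpha}(1)$. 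This yields $W^\alpha_\beta(t)\stackrel{d}{=}N\big(E_\beta^{1/\alpha}(1)D_\alpha(\Lambda^\beta(t)),1\big)$. Since $\Lambda(t)=\int_0^t\lambda(u)\,du$ is continuous and nondecreasing and $\Lambda(t)\to\infty$ by hypothesis, the side condition $\Lambda(t)>e^{1/\beta}$ (which only makes $g$ well defined) holds for all large $t$ and is immaterial for the $\liminf$ at infinity.

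Next I would factor
\begin{equation*}
\frac{W^\alpha_\beta(t)}{g(t)}=\frac{N\big(E_\beta^{1/\alpha}(1)D_\alpha(\Lambda^\beta(t)),1\big)}{E_\beta^{1/\alpha}(1)D_\alpha(\Lambda^\beta(t))}\cdot E_\beta^{1/\alpha}(1)\cdot\frac{D_\alpha(\Lambda^\beta(t))}{g(t)},
\end{equation*}
and apply the strong law of large numbers $N(u,1)/u\to1$ a.s. as $u\to\infty$ for the homogeneous Poisson process; the inner argument $E_\beta^{1/\alpha}(1)D_\alpha(\Lambda^\beta(t))$ tends to $\infty$ a.s. because $D_\alpha(u)\to\infty$ a.s. (as recalled from \cite{MeerSche2004}) and $\Lambda(t)\to\infty$. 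Hence the first factor converges to $1$ a.s. and
\begin{equation*}
\liminf_{t\to\infty}\frac{W^\alpha_\beta(t)}{g(t)}=E_\beta^{1/\alpha}(1)\,\liminf_{t\to\infty}\frac{D_\alpha(\Lambda^\beta(t))}{g(t)}\quad a.s.
\end{equation*}
Finally, observe that $g(t)$ depends on $t$ only through $u=\Lambda^\beta(t)$ and equals precisely $\log\log u/(u^{-1}\log\log u)^{1/\alpha}$; since $u=\Lambda^\beta(t)$ increases continuously to $\infty$, the inner $\liminf$ equals $\liminf_{u\to\infty}D_\alpha(u)\big/\big(\log\log u/(u^{-1}\log\log u)^{1/\alpha}\big)$, which by the Lemma with $\alpha$ in place of $\beta$ equals $\alpha(1-\alpha)^{(1-\alpha)/\alpha}$ a.s. Multiplying by $E_\beta^{1/\alpha}(1)$ gives \eqref{LIL-tcpp}.

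I expect the only real subtlety to be one of rigor rather than of computation: each $\stackrel{d}{=}$ displayed above is an identity in law for fixed $t$, whereas the conclusion is a pathwise almost sure statement. To handle this I would fix the version $W^\alpha_\beta(t)=N(D_\alpha(E_\beta(\Lambda(t))),1)$ on a common probability space with the three ingredients independent, reduce via the Poisson law of large numbers to the $\liminf$ of $D_\alpha(E_\beta(\Lambda(t)))/g(t)$, and then use \eqref{istable-ss}, \eqref{stable-ss} together with the continuity and monotonicity of $\Lambda$ to bring this expression into the scope of the Lemma. The care needed in turning the chain of equalities in distribution into an almost sure identification of the relevant $\liminf$ is, I expect, the main (and essentially the only) obstacle; the remaining manipulations are routine substitutions.
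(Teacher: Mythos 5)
Your proposal is correct and follows essentially the same route as the paper's own proof: the subordination representation, the two self-similarity identities to reduce to $E_\beta^{1/\alpha}(1)D_\alpha(\Lambda^\beta(t))$, the Poisson law of large numbers to strip off the outer process, and the Lemma's LIL for the stable subordinator (with $\alpha$ in place of $\beta$) to evaluate the remaining $\liminf$. Your closing remark about the gap between equalities in distribution and the pathwise a.s.\ conclusion is a genuine point of rigor that the paper itself does not address; otherwise the two arguments coincide step for step.
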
	

It is easy to find the generating function of the NSTFPP from the generating function of the STFPP. It is given by
\begin{align}
G^{\alpha,\beta}_\Lambda(s,t) = L_\beta(- (1-s)^\alpha\Lambda^\beta(t)), ~~|s| \leq 1.	\end{align}
We now derive the governing equation for the {\it pgf} $G^{\alpha,\beta}_\Lambda(s,t)$ of the NSTFPP.
\begin{theorem}
	The {\it pgf} $G^{\alpha,\beta}_\Lambda(s,t)$ of the NSTFPP solves the following fractional differential equation
	\begin{equation}
	D_t^\beta G^{\alpha,\beta}_\Lambda(s,t)=\sum_{k=0}^{\infty}\frac{(-1)^{k+1}(1-s)^{\alpha(k+1)}}{\Gamma(\beta(k+1)+1)}D_t^\beta\Lambda^{\beta(k+1)}(t),
	\end{equation}\label{frac-gen}
	with the initial condition $G^{\alpha,\beta}_\Lambda(s,0)=0$.  $D^\beta_t(\cdot)$ denotes the Caputo fractional derivative of order $\beta$ defined in \eqref{cd}.
\end{theorem}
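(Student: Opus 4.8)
The plan is to argue directly from the closed form of the {\it pgf} recorded just before the statement, rather than from the governing equation of the STFPP: the Caputo operator does not commute with the deterministic time change $t\mapsto\Lambda(t)$, so the STFPP equation cannot simply be transplanted. First I would write $G^{\alpha,\beta}_\Lambda(s,t)=L_\beta\!\big(-(1-s)^\alpha\Lambda^\beta(t)\big)$, expand $L_\beta$ through its defining series \eqref{Mittag-Leffler-function}, then split off the $k=0$ term and shift the summation index $k\mapsto k+1$:
\[
G^{\alpha,\beta}_\Lambda(s,t)=\sum_{k=0}^{\infty}\frac{\big(-(1-s)^\alpha\Lambda^\beta(t)\big)^{k}}{\Gamma(\beta k+1)}
=1+\sum_{k=0}^{\infty}\frac{(-1)^{k+1}(1-s)^{\alpha(k+1)}\,\Lambda^{\beta(k+1)}(t)}{\Gamma(\beta(k+1)+1)}.
\]
Setting $t=0$ and using $\Lambda(0)=\int_0^0\lambda(u)\,du=0$ kills every summand of the tail (each exponent $\beta(k+1)$ is positive), which pins down the initial value and identifies the tail with $G^{\alpha,\beta}_\Lambda(s,t)-1$, the quantity whose value at $t=0$ vanishes.

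Next I would apply $D_t^\beta$ to both sides of the displayed identity. By \eqref{cd} the Caputo derivative of a constant is zero, so the leading $1$ drops out, and, granting that $D_t^\beta$ may be moved inside the sum, one lands exactly on
\[
D_t^\beta G^{\alpha,\beta}_\Lambda(s,t)=\sum_{k=0}^{\infty}\frac{(-1)^{k+1}(1-s)^{\alpha(k+1)}}{\Gamma(\beta(k+1)+1)}\,D_t^\beta\Lambda^{\beta(k+1)}(t),
\]
which is the assertion. As a sanity check, for $\Lambda(t)=t$ one has $D_t^\beta t^{\beta(k+1)}=\frac{\Gamma(\beta(k+1)+1)}{\Gamma(\beta k+1)}\,t^{\beta k}$, so the right-hand side collapses to $-(1-s)^\alpha L_\beta\!\big(-(1-s)^\alpha t^\beta\big)$, recovering the STFPP equation with $\lambda=1$.

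The step that needs genuine care, and which I expect to be the main obstacle, is the termwise application of $D_t^\beta$ to the series. Fix $T>0$ and put $M=\Lambda(T)$. Since $\Lambda$ is continuous and nondecreasing with $\Lambda(0)=0$ and is absolutely continuous with $\Lambda'=\lambda\in L^1_{\mathrm{loc}}$, each $\Lambda^{\beta(k+1)}$ is absolutely continuous on $[0,T]$, so $D_t^\beta\Lambda^{\beta(k+1)}$ is well defined through \eqref{cd} for every $t>0$. For $|s|\le 1$ one has $|1-s|\le 2$, and one then checks that, uniformly on compact subintervals of $(0,T]$ and in $s$, the partial sums of the series and of their termwise $t$-derivatives are dominated by constant multiples of convergent series of Mittag-Leffler type, the powers of $\Lambda(t)$ being controlled by powers of $M$ while the factors $\Gamma(\beta(k+1)+1)$ in the denominators outgrow the polynomial factors $\beta(k+1)$ produced by differentiation. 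The Weierstrass $M$-test together with dominated convergence applied to the fractional integral in \eqref{cd} then justifies exchanging $D_t^\beta$ with $\sum_{k}$, completing the proof.
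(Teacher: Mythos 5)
Your proposal is correct and follows essentially the same route as the paper: both arguments reduce to applying the Caputo operator termwise to the Mittag--Leffler series of $G^{\alpha,\beta}_\Lambda(s,t)=L_\beta\big(-(1-s)^\alpha\Lambda^\beta(t)\big)$ and recognizing each resulting term as $\Gamma(\beta(k+1)+1)^{-1}D_t^\beta\Lambda^{\beta(k+1)}(t)$ --- the paper differentiates the composite inside the Caputo integral first and then expands, you expand first and then differentiate, and you additionally supply the justification for the sum--integral interchange that the paper performs silently. A minor point worth recording: your computation correctly yields $G^{\alpha,\beta}_\Lambda(s,0)=L_\beta(0)=1$, so the initial condition ``$G^{\alpha,\beta}_\Lambda(s,0)=0$'' in the theorem statement appears to be a typo.
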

\begin{proof}
	We begin by directly computing the Caputo derivative of $G^{\alpha,\beta}_\Lambda(s,t)$
	\begin{align*}
	D_t^\beta G^{\alpha,\beta}_\Lambda(s,t)&=D_t^\beta L_\beta\left(-(1-s)^\alpha\Lambda^\beta(t)\right)\\
	&=\frac{1}{\Gamma(1-\beta)}\int_{0}^{t}\frac{\frac{d}{du}L_\beta\left(-(1-s)^\alpha\Lambda^\beta(u)\right)}{(t-u)^\beta}du\\
	&=\frac{1}{\Gamma(1-\beta)}\int_{0}^{t}\frac{-(1-s)^\alpha\lambda(u)\Lambda^{\beta-1}(u)L^1_{\beta,\beta}\left(-(1-s)^\alpha\Lambda^\beta(u)\right)}{(t-u)^\beta}du\\
	&=\frac{1}{\Gamma(1-\beta)}\int_{0}^{t}\frac{-(1-s)^\alpha\lambda(u)\Lambda^{\beta-1}(u)}{(t-u)^\beta}\sum_{k=0}^{\infty}\frac{(-1)^k(1-s)^{\alpha k}\Lambda^{\beta k}(u)}{\Gamma(\beta k+\beta)}du\\
	&=\frac{1}{\Gamma(1-\beta)}\sum_{k=0}^{\infty}\frac{(-1)^{k+1}(1-s)^{\alpha(k+1)}}{\Gamma(\beta k+\beta)}\int_{0}^{t}\frac{\lambda(u)\Lambda^{\beta(k+1)-1}(u)}{(t-u)^\beta}du\\
	&=\frac{1}{\Gamma(1-\beta)}\sum_{k=0}^{\infty}\frac{(-1)^{k+1}(1-s)^{\alpha(k+1)}}{\Gamma(\beta k+\beta)}\int_{0}^{t}\frac{\frac{d}{du}\Lambda^{\beta(k+1)}(u)}{(\beta k+\beta)(t-u)^\beta}du\\
	&=\sum_{k=0}^{\infty}\frac{(-1)^{k+1}(1-s)^{\alpha(k+1)}}{\Gamma(\beta k+\beta+1)}D_t^\beta \Lambda^{\beta(k+1)}(t),
	\end{align*}
	which completes the proof. 
\end{proof}
We next present an alternate representation of the generating function of the NSTFPP process $G^{\alpha,\beta}_\Lambda(s,t)$. This result is a generalization of the representation obtained by Orsingher and Polito (see \cite[Remark 2.4]{sfpp}) for the STFPP.
\begin{theorem} Let $U_i\stackrel{iid}{\sim}U[0,1]$, $i=1,\dots$, then 
	\begin{equation*}
	G^{\alpha,\beta}_\Lambda(s,t)= \mathbb{P} \left[ \min_{0\leq i \leq N^\alpha_\beta(t)} U_i^{1/\alpha}
	\geq 1-s \right]. \notag
	\end{equation*}
	\begin{proof}
		The generating function of the NSTFPP $G^{\alpha,\beta}_\Lambda(s,t)$ can be written as
		\begin{align*}
		G^{\alpha,\beta}_\beta(s,t) & = L_\beta(-\Lambda^\beta(t) (1-s)^\alpha) \\
		& = \sum_{k=0}^\infty (-1)^k \frac{(\Lambda^\beta(t))^k(1-s)^{\alpha k}}{\Gamma(\beta k+1)} \notag \\
		& = \sum_{k=0}^{\infty} (-1)^k \frac{(\Lambda^\beta(t))^k}{\Gamma(\beta k+1)}
		\sum_{i=0}^k (-1)^i \binom{k}{i} \left[1-(1-s)^\alpha\right]^i \notag \\
		& = \sum_{i=0}^\infty \left[ 1-(1-s)^\alpha \right]^i \sum_{k=i}^\infty (-1)^{k-i}
		\binom{k}{i} \frac{(\Lambda^\beta(t))^k}{\Gamma(\beta k+1)} \notag \\
		& = \sum_{i=0}^\infty \left[ \mathbb{P} \left( U_i^{1/\alpha} \geq 1-s \right) \right]^i
		\mathbb{P}\left[ N_\beta(t) = i \right] \notag \\
		& = \mathbb{P} \left[ \min_{0\leq i \leq N^\alpha_\beta(t)} U_i^{1/\alpha}
		\geq 1-s \right]. \qedhere
		\end{align*}
	\end{proof}
	
\end{theorem}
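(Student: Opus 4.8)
The plan is to work with the closed form of the generating function rather than argue probabilistically from scratch. The starting point is the formula already recorded for the {\it pgf} of the NSTFPP, namely $G^{\alpha,\beta}_\Lambda(s,t)=L_\beta\big(-(1-s)^\alpha\Lambda^\beta(t)\big)$. First I would expand $L_\beta$ into its defining power series \eqref{Mittag-Leffler-function}, obtaining $\sum_{k\ge0}(-1)^k\frac{\Lambda^{\beta k}(t)(1-s)^{\alpha k}}{\Gamma(\beta k+1)}$, and then rewrite each factor $(1-s)^{\alpha k}$ as $\big[1-\big(1-(1-s)^\alpha\big)\big]^{k}$ and expand it by the binomial theorem. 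This recasts $G^{\alpha,\beta}_\Lambda(s,t)$ as the double sum $\sum_{k\ge0}\sum_{i=0}^{k}(-1)^{k-i}\binom{k}{i}\frac{\Lambda^{\beta k}(t)}{\Gamma(\beta k+1)}\big[1-(1-s)^\alpha\big]^{i}$.

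Next I would interchange the two summations so that $i$ runs over $0,1,2,\dots$ on the outside and $k$ over $i,i+1,\dots$ on the inside. For $s\in[0,1]$ this is legitimate by absolute convergence: since $0\le 1-(1-s)^\alpha\le1$, summing $\binom{k}{i}\big[1-(1-s)^\alpha\big]^{i}$ over $i$ first bounds the double series in absolute value by $\sum_{k\ge0}\frac{\Lambda^{\beta k}(t)}{\Gamma(\beta k+1)}\big(2-(1-s)^\alpha\big)^{k}=L_\beta\big(\Lambda^{\beta}(t)(2-(1-s)^\alpha)\big)<\infty$, because $L_\beta$ is entire. After the interchange the coefficient of $\big[1-(1-s)^\alpha\big]^{i}$ is $\sum_{k\ge i}(-1)^{k-i}\binom{k}{i}\frac{\Lambda^{\beta k}(t)}{\Gamma(\beta k+1)}$, and the key observation is that this is exactly $\mathbb{P}[N_\beta(\Lambda(t))=i]$, the {\it pmf} of the time fractional Poisson process evaluated at the rate $\Lambda(t)$; indeed it is the $\alpha=1$ instance of \eqref{nsfpp-pmf} (equivalently of the STFPP {\it pmf} \eqref{pmf:stfppd}).

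Finally I would reinsert probability into the remaining factor: for $U\sim U[0,1]$ and $s\in[0,1]$ one has $\mathbb{P}\big(U^{1/\alpha}\ge 1-s\big)=1-\mathbb{P}\big(U<(1-s)^\alpha\big)=1-(1-s)^\alpha$, so by independence of $U_1,\dots,U_i$, $\big[1-(1-s)^\alpha\big]^{i}=\mathbb{P}\big(\min_{1\le j\le i}U_j^{1/\alpha}\ge 1-s\big)$, the empty minimum at $i=0$ being read as the sure event. Substituting the two identities gives $G^{\alpha,\beta}_\Lambda(s,t)=\sum_{i\ge0}\mathbb{P}\big(\min_{1\le j\le i}U_j^{1/\alpha}\ge1-s\big)\,\mathbb{P}\big[N_\beta(\Lambda(t))=i\big]$, and conditioning on the value of the count, using independence of the $U_j$ from the process, collapses this to $\mathbb{P}\big[\min_{0\le i\le N_\beta(\Lambda(t))}U_i^{1/\alpha}\ge 1-s\big]$, which is the asserted identity. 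I expect the only genuinely delicate point to be the identification of the inner series with the time fractional Poisson {\it pmf} together with the bookkeeping that matches its index to the range of the minimum (and the $i=0$ convention); the series rearrangement itself is routine once absolute convergence is noted, and the whole argument simply carries the rate function $\Lambda(t)$ through the Orsingher--Polito computation for the STFPP unchanged.
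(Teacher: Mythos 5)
Your proposal is correct and follows essentially the same route as the paper's proof: expand $L_\beta$, apply the binomial identity to $(1-s)^{\alpha k}$, interchange the sums, recognize the inner series as the time fractional Poisson \textit{pmf} at rate $\Lambda(t)$, and read $\left[1-(1-s)^\alpha\right]^i$ as $\mathbb{P}\left(\min_{1\le j\le i}U_j^{1/\alpha}\ge 1-s\right)$. The only additions are your explicit justification of the summation interchange by absolute convergence and the $i=0$ convention, both of which the paper leaves implicit.
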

\subsection*{Arrival times of $\{W^\alpha_\beta(t)\}_{t\geq0}$} Define 
\begin{equation*}
	J_n=\min\{t\geq0:W^\alpha_\beta(t)=n\},
\end{equation*}
which denotes the arrival times of the NSTFPP $\{W^\alpha_\beta(t)\}_{t\geq0}$. Note that $\{J_n\leq t\}=\{W^\alpha_\beta(t)\geq n\}$, its distribution function is given by
\begin{align}
	F_{J_n}(t)&=\mathbb{P}[J_n\leq t]=\mathbb{P}[W^\alpha_\beta(t)\geq n]=\sum_{r=n}^{\infty}\mathbb{P}[W^\alpha_\beta(t)=r]\nonumber\\
	&=\sum_{r=n}^{\infty}\mathbb{P}\left[N\left(D_\alpha(E_\beta(\Lambda(t))),1\right)=r\right]\nonumber\\
&=\sum_{r=n}^{\infty}\mathbb{P}\left[N\left(\Lambda^{\beta/\alpha}(t)D_\alpha(E_\beta(1)),1\right)=r\right]=\sum_{r=n}^{\infty}\mathbb{P}\left[N\left(\Lambda^{\beta/\alpha}(t),D_\alpha(E_\beta(1))\right)=r\right]\nonumber\\
&=\sum_{r=n}^{\infty}\int_{0}^{\infty}\mathbb{P}\left[N\left(\Lambda^{\beta/\alpha}(t),x\right)=r\right]h^\alpha_\beta(x,1)dx,\nonumber\\
\shortintertext{using Fubini's theorem as the intergrand is positive}
&		=\int_{0}^{\infty}\sum_{r=n}^{\infty}\mathbb{P}\left[N\left(\Lambda^{\beta/\alpha}(t),x\right)=r\right]h^\alpha_\beta(x,1)dx\nonumber\\
&=\int_{0}^{\infty}\mathbb{P}\left[N\left(\Lambda^{\beta/\alpha}(t),x\right)\geq n\right]h^\alpha_\beta(x,1)dx,\nonumber
%	&=1-\sum_{r=0}^{n}\frac{(-1)^r}{r!} \sum_{k=0}^\infty \frac{(-\Lambda^\beta(t))^k}{
%		\Gamma(\beta k+1)} \frac{\Gamma(\alpha k+1)}{\Gamma(\alpha k + 1- r)}.
\end{align}
where $h^\alpha_\beta(x,t)$ is the {\it pdf} of $D_\alpha(E_\beta(t))$.

The above proved results holds valid for $\alpha=1$ which leads to the the non-homogeneous time fractional Poisson process (NTFPP) and also for $\beta=1$ which leads to the non-homogeneous space fractional Poisson process (NSFPP). It is known (see \cite{sfpp}) that the mean of the SFPP process is infinite and hence it is not possible to investigate properties involving mean and covariance. On the other hand, we can study the results for the TFPP process.  We therefore present the definition and prove some results for the NTFPP process. We henceforth discuss only the NTFPP process
\subsection{Non-homogeneous time fractional Poisson process}
	\begin{definition}[NTFPP]\label{def:NTFPP}
		The non-homogeneous time fractional Poisson process (NTFPP) is defined as 
		\begin{equation}\label{def:ntfpp}
		W^{(1)}_\beta(t)=N_\beta(\Lambda(t),1),~t\geq0,
		\end{equation}
		where $\{N_\beta(t,\lambda)\}_{t\geq0}$ is the TFPP and $\Lambda(t)=\int_{0}^{t}\lambda(u)du$ is the rate function with intensity function $\lambda(u),u\geq0.$
	\end{definition}

	\noindent  The mean,  variance (see \cite{lask,beghinejp2009}) and the covariance functions (see \cite[eq. (14)]{LRD2014}) of the TFPP are given by 
	\begin{align}
	\mathbb{E}[N_{\beta}(t,\lambda)] &= qt^{\beta};~\label{fppmean} 
	\mbox{Var}[N_{\beta}(t,\lambda)]=q t^{\beta}+Rt^{2\beta}, \\
	\text{Cov}[N_{\beta}(s,\lambda),N_{\beta}(t,\lambda)]&=qs^{\beta}+ ds^{2\beta}+ q^{2}[\beta t^{2\beta}B(\beta,1+\beta;s/t)-(st)^{\beta}],\label{fpp-cov}
	\end{align}
	\noindent
	$0<s\leq t$, where $q=\lambda/\Gamma(1+\beta)$, $R=\frac{\lambda ^{2}}{\beta}\left(\frac{1}{\Gamma(2\beta)}-\frac{1}{\beta\Gamma^{2}(\beta)}\right)>0$,  $d=\beta q^{2}B(\beta, 1+\beta)$, and $B(a,b;x)=\int_{0}^{x}t^{a-1}(1-t)^{b-1}dt,~0<x<1$, is the incomplete beta function.
\noindent We next give the mean, the variance and the covariance of the NTFPP.
\begin{theorem}\label{mean-var-cov-tcfpp-1}
	Let $0<s\leq t<\infty$, $q_1=1/\Gamma(1+\beta)$ and $d_1=\beta q_1^{2}B(\beta, 1+\beta)$. The distributional properties of the NTFPP $\{W^{(1)}_\beta(t)\}_{t\geq0}$ are as follows:\vspace*{0.25cm}\\
	(i) $~~\mathbb{E}[W^{(1)}_\beta(t)]=q_1\Lambda^\beta(t)$,\\
	(ii)$~~\text{Var}[W^{(1)}_\beta(t)]=q_1\Lambda^{\beta}(t)(1-q_1\Lambda^{\beta}(t))+2d_1\Lambda^{2\beta}(t)$,
	\begin{flalign}
	\text{(iii) }~\text{Cov}[W^{(1)}_\beta(s),W^{(1)}_\beta(t)]&=q_1\Lambda^{\beta}(s)+ d_1\Lambda^{2\beta}(s)-q_1^{2}\Lambda^{\beta}(s)\Lambda^{\beta}(t) &&\nonumber&\\&~~~
	+ q_1^{2}\beta  \Lambda^{2\beta}(t)B(\beta,1+\beta;\Lambda(s)/\Lambda(t)),~~~\text{for }0<s\leq t.\nonumber&
	\end{flalign}
\end{theorem}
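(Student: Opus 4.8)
The plan is to deduce all three identities directly from the TFPP moment formulas \eqref{fppmean}--\eqref{fpp-cov} by means of a deterministic time change. Since the intensity $\lambda(u)\ge 0$, the rate function $\Lambda(t)=\int_0^t\lambda(u)\,du$ is non-decreasing, and by Definition \ref{def:NTFPP} the NTFPP is literally the TFPP run on the clock $\Lambda$; hence for every fixed pair $0<s\le t$ one has the exact joint-law identity $(W^{(1)}_\beta(s),W^{(1)}_\beta(t))\stackrel{d}{=}(N_\beta(\Lambda(s),1),N_\beta(\Lambda(t),1))$ with $0\le\Lambda(s)\le\Lambda(t)$. So I would simply set $\lambda=1$, $s\mapsto\Lambda(s)$, $t\mapsto\Lambda(t)$ in \eqref{fppmean}--\eqref{fpp-cov}, noting that then $q$ becomes $q_1=1/\Gamma(1+\beta)$ and $d$ becomes $d_1=\beta q_1^2 B(\beta,1+\beta)$.

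Part (i) is then immediate from \eqref{fppmean}. For part (ii) the substitution gives $\text{Var}[W^{(1)}_\beta(t)]=q_1\Lambda^\beta(t)+R_1\Lambda^{2\beta}(t)$ with $R_1=\tfrac1\beta\bigl(\tfrac1{\Gamma(2\beta)}-\tfrac1{\beta\Gamma^2(\beta)}\bigr)$, so to match the stated form I would verify the Gamma-function identity $R_1=2d_1-q_1^2$. Using $B(\beta,1+\beta)=\Gamma(\beta)\Gamma(1+\beta)/\Gamma(1+2\beta)$ together with $\Gamma(1+\beta)=\beta\Gamma(\beta)$ and $\Gamma(1+2\beta)=2\beta\Gamma(2\beta)$ collapses $2d_1$ to $1/(\beta\Gamma(2\beta))$, whence $2d_1-q_1^2=\tfrac1\beta(\tfrac1{\Gamma(2\beta)}-\tfrac1{\beta\Gamma^2(\beta)})=R_1$; rewriting $q_1\Lambda^\beta(t)+R_1\Lambda^{2\beta}(t)=q_1\Lambda^\beta(t)(1-q_1\Lambda^\beta(t))+2d_1\Lambda^{2\beta}(t)$ then finishes (ii).

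For part (iii), inserting $\lambda=1$, $s\mapsto\Lambda(s)$, $t\mapsto\Lambda(t)$ into \eqref{fpp-cov} produces $q_1\Lambda^\beta(s)+d_1\Lambda^{2\beta}(s)+q_1^2[\beta\Lambda^{2\beta}(t)B(\beta,1+\beta;\Lambda(s)/\Lambda(t))-(\Lambda(s)\Lambda(t))^\beta]$, which is exactly the claimed expression once $-q_1^2(\Lambda(s)\Lambda(t))^\beta$ is split as $-q_1^2\Lambda^\beta(s)\Lambda^\beta(t)$; the monotonicity $\Lambda(s)\le\Lambda(t)$ keeps $\Lambda(s)/\Lambda(t)\in(0,1]$, so \eqref{fpp-cov} genuinely applies (the degenerate case $\Lambda(t)=0$ being trivial). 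I expect the only real obstacle to be the Gamma/Beta bookkeeping behind $R_1=2d_1-q_1^2$ in part (ii); every other step is a direct reparametrization of the corresponding TFPP formula.
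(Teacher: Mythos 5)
Your proposal is correct and is essentially the paper's own argument: both deduce all three identities by substituting $\lambda=1$, $s\mapsto\Lambda(s)$, $t\mapsto\Lambda(t)$ into the TFPP moment formulas \eqref{fppmean}--\eqref{fpp-cov} (the paper routes part (iii) through the product moment $\mathbb{E}[W^{(1)}_\beta(s)W^{(1)}_\beta(t)]$ before subtracting the means, which is the same computation). The only cosmetic difference is part (ii): the paper obtains it by setting $s=t$ in part (iii) and using $B(\beta,1+\beta;1)=B(\beta,1+\beta)$, whereas you read it off the variance formula in \eqref{fppmean} and check the identity $R_1=2d_1-q_1^2$ --- your Gamma-function verification is correct, so both routes close.
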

\begin{proof}
	Note that using \eqref{fppmean},
	\begin{align}\label{tcfpp-mean}
	\mathbb{E}[W^{(1)}_\beta(t)]&=\mathbb{E}[N_{\beta}(\Lambda(t),1)]=q_1\Lambda^\beta(t),
	\end{align}
	which proves Part (i). From \eqref{fpp-cov} and \eqref{fppmean}, 
	\begin{equation}\label{efppsfppt}
	\mathbb{E}[N_{\beta}(s,\lambda)N_{\beta}(t,\lambda)]=qs^{\beta}+d_1s^{2\beta}+ q^{2}\beta\left[t^{2\beta}B(\beta,1+\beta;s/t)\right]
	\end{equation}
	which leads to
	\begin{align}
	\mathbb{E}[W^{(1)}_\beta(s)W^{(1)}_\beta(t)]&=\mathbb{E}\left[N_{\beta}(\Lambda(s),1)N_{\beta}(\Lambda(t),1)\right]\nonumber\\
	&=q_1\Lambda^{\beta}(s)+ d_1\Lambda^{2\beta}(s)+q_1^{2} \beta\Lambda^{2\beta}(t)B(\beta,1+\beta;\Lambda(s)/\Lambda(t)).\label{bivariate-fnbfp}
	\end{align}
	By  \eqref{tcfpp-mean} and \eqref{bivariate-fnbfp}, Part (iii) follows. Part (ii) follows from Part (iii) by putting $s=t$.
\end{proof}
\subsection*{Long-range dependence}

 We next prove the LRD property for the NTFPP $\{W_\beta^{(1)}(t)\}_{t\geq0}$ process with  
 Weibull rate function. First, we have the following result for the asymptotic expansion of the correlation function of the NTFPP. We first have the following definition.
 
 \begin{definition}
 	Let $f(x)$ and $g(x)$ be positive functions. We say that $f(x)$ is asymptotically equal to $g(x)$, written as $f(x)\sim g(x)$, as $x$ tends to infinity, if 
 	\begin{equation*}
 	\lim\limits_{x\rightarrow\infty}\frac{f(x)}{g(x)}=1.
 	\end{equation*}
 \end{definition}
 \begin{theorem} Let $\Lambda(t)\to\infty, $ as $t\to\infty$.
 	The correlation function of the NTFPP $\{W^{(1)}_\beta(t)\}_{t\geq 0}$ has the asymptotic expansion as,
 	\begin{equation}\label{asym-ntfpp}
 	\text{Corr}[W^{(1)}_\beta(s),W^{(1)}_\beta(t)]	\sim\Lambda^{-\beta}(t)\left(\frac{q_1\Lambda^{\beta}(s)+d_1\Lambda^{2\beta}(s)}{\sqrt{(2d_1-q_1^2)\text{Var}[W^{(1)}_\beta(s)]}}\right).
 	\end{equation}
 	\begin{proof}\noindent Consider the last term of $\text{Cov}[W^{(1)}_\beta(s),W^{(1)}_\beta(t)]$ given in Theorem \ref{mean-var-cov-tcfpp-1} (iii), namely,
 		%\begin{equation*}%\label{autocovariance-last-summand}
 		$\beta q_1^{2} \Lambda^{2\beta}(t)B(\beta,1+\beta;\Lambda(s)/\Lambda(t)).$
 		%\end{equation*}
 		We get the asymptotic expansion, for large $t$,
 		\begin{align}
 		\beta q_1^{2} \Lambda^{2\beta}(t)B(\beta,1+\beta;\Lambda(s)/\Lambda(t))&=\beta q_1^2\Lambda^{2\beta}(t)\int_{0}^{\frac{\Lambda(s)}{\Lambda(t)}}u^{\beta-1}(1-u)^\beta du\nonumber\\
 		&=\beta q_1^2\Lambda^{2\beta}(t)\left(\frac{1}{\beta}\left(\frac{\Lambda(s)}{\Lambda(t)}\right)^\beta-\frac{\beta}{1+\beta}\left(\frac{\Lambda(s)}{\Lambda(t)}\right)^{\beta+1}+O\left(\left(\frac{\Lambda(s)}{\Lambda(t)}\right)^{\beta+2}\right)\right)\nonumber\\
 		&\sim q_1^{2}\Lambda^{\beta}(s)\Lambda^{\beta}(t)\label{autocovariance-last-summand-1}.
 		\end{align}
 		\noindent Using \eqref{autocovariance-last-summand-1}, Theorem \ref{mean-var-cov-tcfpp-1} (iii) becomes for large $t$,
 		\begin{align}
 		\text{Cov}[W^{(1)}_\beta(s),W^{(1)}_\beta(t)]&\sim q_1\Lambda^{\beta}(s)+d_1\Lambda^{2\beta}(s).\label{covariance-large-t}
 		\end{align}
 		
 		\noindent Similarly, from Theorem \ref{mean-var-cov-tcfpp-1} (ii), we have that
 		\begin{align}
 		\text{Var}[W^{(1)}_\beta(t)]&=(2d_1-q_1^2)\Lambda^{2\beta}(t)+q_1\Lambda^\beta(t)\nonumber\\
 		&\sim(2d_1-q_1^2)\Lambda^{2\beta}(t).\label{variance-large-t}
 		\end{align}
 		
 		\noindent Thus, from \eqref{covariance-large-t} and \eqref{variance-large-t}, the correlation between $W^{(1)}_\beta(s)$ and $W^{(1)}_\beta(t)$ for large $t>s$, is
 		\begin{align}
 		\text{Corr}[W^{(1)}_\beta(s),W^{(1)}_\beta(t)]&=\frac{\text{Cov}[W^{(1)}_\beta(s),W^{(1)}_\beta(t)]}{\sqrt{\text{Var}[W_\beta^{(1)}(s)]\text{Var}[W_\beta^{(1)}(t)]}}\nonumber\\
 		&\sim\frac{q_1\Lambda^{\beta}(s)+d_1\Lambda^{2\beta}(s)}{\sqrt{\Lambda^{2\beta}(t)(2d_1-q_1^2)}\sqrt{\text{Var}[W^{(1)}_\beta(s)]}}= \Lambda^{-\beta}(t)\left(\frac{q_1\Lambda^{\beta}(s)+d_1\Lambda^{2\beta}(s)}{\sqrt{(2d_1-q_1^2)\text{Var}[W^{(1)}_\beta(s)]}}\right),\nonumber
 		\end{align}
 		\noindent which proves the result.%decays like the power law $t^{-\beta},~0<\beta<1$ (see Definition \ref{LRD-definition}). Hence, the FNBP exhibits the LRD property.
 	\end{proof}
 \end{theorem}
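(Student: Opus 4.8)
The plan is to read off the correlation directly from the covariance and variance formulas in Theorem~\ref{mean-var-cov-tcfpp-1}, holding $s$ fixed and letting $t\to\infty$, and to retain only the dominant terms. The only genuinely analytic step is determining the behaviour of the incomplete-beta term $\beta q_1^{2}\Lambda^{2\beta}(t)B(\beta,1+\beta;\Lambda(s)/\Lambda(t))$ appearing in Theorem~\ref{mean-var-cov-tcfpp-1}(iii). Since $\Lambda(t)\to\infty$, the argument $x:=\Lambda(s)/\Lambda(t)\to 0$, so I would expand the integrand of $B(\beta,1+\beta;x)=\int_{0}^{x}u^{\beta-1}(1-u)^{\beta}\,du$ via $(1-u)^{\beta}=1-\beta u+O(u^{2})$ and integrate term by term to obtain $B(\beta,1+\beta;x)=\beta^{-1}x^{\beta}-\tfrac{\beta}{1+\beta}x^{\beta+1}+O(x^{\beta+2})$.

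Multiplying this expansion by $\beta q_1^{2}\Lambda^{2\beta}(t)$ produces a leading term equal to $q_1^{2}\Lambda^{\beta}(s)\Lambda^{\beta}(t)$, which cancels exactly against the term $-q_1^{2}\Lambda^{\beta}(s)\Lambda^{\beta}(t)$ in Theorem~\ref{mean-var-cov-tcfpp-1}(iii); the surviving remainder is of order $\Lambda^{\beta+1}(s)\Lambda^{\beta-1}(t)$ and smaller, and since $0<\beta<1$ these tend to $0$ as $t\to\infty$ with $s$ fixed. Hence $\text{Cov}[W^{(1)}_\beta(s),W^{(1)}_\beta(t)]\to q_1\Lambda^{\beta}(s)+d_1\Lambda^{2\beta}(s)$, a constant in $t$ that is strictly positive, so $\text{Cov}[W^{(1)}_\beta(s),W^{(1)}_\beta(t)]\sim q_1\Lambda^{\beta}(s)+d_1\Lambda^{2\beta}(s)$. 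For the variances I would rewrite Theorem~\ref{mean-var-cov-tcfpp-1}(ii) as $\text{Var}[W^{(1)}_\beta(t)]=(2d_1-q_1^{2})\Lambda^{2\beta}(t)+q_1\Lambda^{\beta}(t)$ and note that $2d_1-q_1^{2}$ is exactly the coefficient $R$ from \eqref{fppmean} (specialised to $\lambda=1$), hence positive; therefore $\text{Var}[W^{(1)}_\beta(t)]\sim(2d_1-q_1^{2})\Lambda^{2\beta}(t)$.

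Finally I would substitute both asymptotics into $\text{Corr}[W^{(1)}_\beta(s),W^{(1)}_\beta(t)]=\text{Cov}[W^{(1)}_\beta(s),W^{(1)}_\beta(t)]/\sqrt{\text{Var}[W^{(1)}_\beta(s)]\,\text{Var}[W^{(1)}_\beta(t)]}$, pull $\sqrt{2d_1-q_1^{2}}\,\Lambda^{\beta}(t)$ out of the square root in the denominator, and arrive at \eqref{asym-ntfpp}. The step demanding the most care is the incomplete-beta expansion together with the bookkeeping of which error terms survive after multiplication by $\Lambda^{2\beta}(t)$: one needs $0<\beta<1$ so that $\Lambda^{\beta-1}(t)\to0$, and one must keep $s$ fixed while $t\to\infty$ so that the $t$-independent quantity $q_1\Lambda^{\beta}(s)+d_1\Lambda^{2\beta}(s)$ genuinely dominates the covariance; everything else is substitution into the formulas already established in Theorem~\ref{mean-var-cov-tcfpp-1}.
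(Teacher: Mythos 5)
Your proposal is correct and follows essentially the same route as the paper: expand $B(\beta,1+\beta;x)$ for small $x=\Lambda(s)/\Lambda(t)$, observe that the leading term cancels the $-q_1^{2}\Lambda^{\beta}(s)\Lambda^{\beta}(t)$ term in the covariance, and then divide by the asymptotic variance. If anything, your version is slightly more careful than the paper's, since you explicitly track that the surviving error after cancellation is $O(\Lambda^{\beta+1}(s)\Lambda^{\beta-1}(t))\to 0$ rather than relying on the bare asymptotic equivalence $\sim$ of the incomplete-beta term.
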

 \begin{corollary}
 	For the Weibull rate function (see Table \ref{table-example}), the NTFPP exhibits the LRD property for $0<a\beta<1$. To see this, we have that from \eqref{asym-ntfpp}, 
 	\begin{equation}
 	\text{Corr}[W^{(1)}_\beta(s),W^{(1)}_\beta(t)]	\sim t^{-a\beta}\left(\frac{q_1s^{a\beta}+d_1 s^{2a\beta}/b^{a\beta}}{\sqrt{(2d_1-q_1^2)\text{Var}[W^{(1)}_\beta(s)]}}\right).
 	\end{equation}
 	Using Definition \ref{LRD-definition}, we have the LRD property of the NTFPP for $0<a\beta<1$.
 \end{corollary}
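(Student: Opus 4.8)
The plan is to specialize the asymptotic expansion \eqref{asym-ntfpp} of the preceding theorem to the Weibull rate function and then read off the decay exponent. First I would note that, from Table \ref{table-example}, the Weibull rate function is $\Lambda(t)=(t/b)^{a}$ with $a,b>0$, so that $\Lambda^{\beta}(t)=b^{-a\beta}t^{a\beta}$, $\Lambda^{2\beta}(s)=b^{-2a\beta}s^{2a\beta}$, and in particular $\Lambda(t)\to\infty$ as $t\to\infty$; this last fact is exactly the hypothesis required to invoke that theorem. Substituting $\Lambda^{-\beta}(t)=b^{a\beta}t^{-a\beta}$ together with the analogous expressions at $s$ into \eqref{asym-ntfpp}, and absorbing the factor $b^{a\beta}$ coming from $\Lambda^{-\beta}(t)$ into the bracket, gives, for large $t$,
\begin{equation*}
\text{Corr}[W^{(1)}_\beta(s),W^{(1)}_\beta(t)]\sim t^{-a\beta}\left(\frac{q_1 s^{a\beta}+d_1 s^{2a\beta}/b^{a\beta}}{\sqrt{(2d_1-q_1^2)\,\text{Var}[W^{(1)}_\beta(s)]}}\right),
\end{equation*}
which is of the form $c(s)\,t^{-d}$ with $d:=a\beta$.

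The second step is to verify that, for each fixed $s>0$, the coefficient
\begin{equation*}
c(s)=\frac{q_1 s^{a\beta}+d_1 s^{2a\beta}/b^{a\beta}}{\sqrt{(2d_1-q_1^2)\,\text{Var}[W^{(1)}_\beta(s)]}}
\end{equation*}
is a genuine positive constant. Since $q_1=1/\Gamma(1+\beta)>0$ and $d_1=\beta q_1^{2}B(\beta,1+\beta)>0$, the numerator is strictly positive, and $\text{Var}[W^{(1)}_\beta(s)]>0$. The only point requiring a word is the positivity of $2d_1-q_1^{2}$ under the radical: a short computation using $\Gamma(1+\beta)=\beta\Gamma(\beta)$ and $B(\beta,1+\beta)=\Gamma(\beta)^2/(2\Gamma(2\beta))$ shows that $2d_1-q_1^{2}=\tfrac{1}{\beta}\big(\tfrac{1}{\Gamma(2\beta)}-\tfrac{1}{\beta\Gamma^{2}(\beta)}\big)$, which is precisely the constant $R$ of the TFPP variance formula (with $\lambda=1$), already recorded as positive; equivalently, it is the leading coefficient of $\text{Var}[W^{(1)}_\beta(t)]$ in \eqref{variance-large-t}, hence necessarily nonnegative and in fact positive for $\beta\in(0,1)$.

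Finally, having written $\text{Corr}[W^{(1)}_\beta(s),W^{(1)}_\beta(t)]\sim c(s)\,t^{-d}$ with $c(s)>0$ and $d=a\beta>0$, I would invoke Definition \ref{LRD-definition}: the process has the LRD property exactly when $d\in(0,1)$, i.e.\ $0<a\beta<1$, and the SRD property when $d\in(1,2)$, i.e.\ $1<a\beta<2$. I do not expect a real obstacle here; all the analytic work already resides in the asymptotic-expansion theorem, and the only mild care needed is the check that $2d_1-q_1^{2}>0$, so that $c(s)$ is well defined and strictly positive and Definition \ref{LRD-definition} genuinely applies.
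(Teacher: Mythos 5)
Your proposal is correct and follows essentially the same route as the paper: substitute the Weibull rate $\Lambda(t)=(t/b)^{a}$ into the asymptotic expansion \eqref{asym-ntfpp}, identify the decay exponent $d=a\beta$, and invoke Definition \ref{LRD-definition}. Your additional check that $2d_1-q_1^{2}>0$ (via the identity $2d_1-q_1^{2}=\tfrac{1}{\beta}\bigl(\tfrac{1}{\Gamma(2\beta)}-\tfrac{1}{\beta\Gamma^{2}(\beta)}\bigr)=R|_{\lambda=1}$) is accurate and a worthwhile explicit verification, though the paper already records this constant as positive in its statement of the TFPP variance.
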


\subsection{Fractional non-homogeneous Poisson process}
The fractional non-homogeneous Poisson process (FNPP) is introduced by Leonenko et. al. (see \cite{fnhpp}). They studied the governing fractional differential-integral-difference equation, distributional properties and arrival times. Here, we present some additional results related to the FNPP process. 
	\begin{definition}
		The fractional non-homogeneous Poisson process (FNPP), introduced by Leonenko et al. (see \cite{fnhpp}), is defined by time-changing the NPP by the inverse $\beta$-stable subordinator, that is,
		\begin{equation}
		W^{(2)}_\beta(t)=N(\Lambda(E_\beta(t)),1),~t\geq0.
		\end{equation}
	\end{definition}
\noindent We next present the limit theorem for the FNPP for the Weibull rate function.% $\Lambda(t),t\geq0.$ 
\begin{theorem}
For Weibull rate function $\Lambda(t)=(t/b)^a,~a,b>0$, then 
\begin{equation}\label{fnpp-weibull}
\lim\limits_{t\to\infty}\frac{W^{(2)}_\beta(t)}{t^{a\beta}}=\left(\frac{E_\beta(1)}{b}\right)^a,~~a.s.
\end{equation}
\begin{proof}
We begin with
\begin{align*}
\lim\limits_{t\to\infty}\frac{W^{(2)}_\beta(t)}{t^{a\beta}}&=\lim\limits_{t\to\infty}\frac{N\left(E_\beta(t)^{a}/b^a,1\right)}{t^{a\beta}}=\lim\limits_{t\to\infty}\frac{N\left(t^{a\beta}E^a_\beta(1)/b^a,1\right)}{t^{a\beta}}\\
&=\lim\limits_{t\to\infty}\frac{N\left(t^{a\beta}E^a_\beta(1)/b^a,1\right)}{t^{a\beta}E^a_\beta(1)/b^a}\frac{t^{a\beta}E^a_\beta(1)/b^a}{t^{a\beta}}\\
&=\left(\frac{E_\beta(1)}{b}\right)^a,~~a.s., 
\end{align*}
using \eqref{LIL-Poisson}. 
\end{proof}
\end{theorem}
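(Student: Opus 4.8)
The plan is to follow the pattern of the NSTFPP limit theorem and reduce everything to the strong law of large numbers for the homogeneous Poisson process. First I would substitute the Weibull rate $\Lambda(t)=(t/b)^a$ into the definition of the FNPP, writing
\begin{equation*}
W^{(2)}_\beta(t)=N(\Lambda(E_\beta(t)),1)=N\!\left(\frac{E_\beta^{a}(t)}{b^{a}},\,1\right),
\end{equation*}
so that all of the randomness is carried by the inverse $\beta$-stable time change.

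Next I would apply the self-similarity relation \eqref{istable-ss}, which gives $E_\beta(t)\stackrel{d}{=}t^{\beta}E_\beta(1)$ and hence $W^{(2)}_\beta(t)\stackrel{d}{=}N\!\left(t^{a\beta}E_\beta^{a}(1)/b^{a},\,1\right)$. The decisive step is then the usual multiply-and-divide manoeuvre by the random normalisation $t^{a\beta}E_\beta^{a}(1)/b^{a}$:
\begin{equation*}
\frac{W^{(2)}_\beta(t)}{t^{a\beta}}\stackrel{d}{=}\frac{N\!\left(t^{a\beta}E_\beta^{a}(1)/b^{a},\,1\right)}{t^{a\beta}E_\beta^{a}(1)/b^{a}}\cdot\left(\frac{E_\beta(1)}{b}\right)^{a}.
\end{equation*}
Since $E_\beta(1)>0$ almost surely and $a\beta>0$, the argument of $N$ diverges to $+\infty$ a.s.\ as $t\to\infty$; the Poisson strong law of large numbers \eqref{LIL-Poisson}, $N(u,1)/u\to1$ a.s.\ as $u\to\infty$ (with $u$ growing continuously, which is admissible by monotonicity of $N$), then drives the first factor to $1$, leaving exactly $(E_\beta(1)/b)^{a}$, as claimed.

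I expect the only point requiring a little care to be the same one that arises in the NSTFPP limit theorem: matching the distributional identity furnished by \eqref{istable-ss} with the almost-sure form of the statement. Concretely, one argues on a realisation along which $t^{a\beta}E_\beta^{a}(1)/b^{a}\to\infty$ and along which the Poisson law of large numbers holds, which needs nothing more than $E_\beta(1)\in(0,\infty)$ almost surely; no finer information about the path behaviour of $E_\beta$ is used. In particular the argument is shorter than that of the general NSTFPP limit theorem, since here the $\alpha$-stable subordinator does not enter.
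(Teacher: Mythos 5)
Your proposal is correct and follows essentially the same route as the paper: substitute the Weibull rate into $W^{(2)}_\beta(t)=N(\Lambda(E_\beta(t)),1)$, invoke the self-similarity relation \eqref{istable-ss} to pull out the factor $t^{a\beta}E_\beta^a(1)/b^a$, and conclude by the Poisson strong law of large numbers after the multiply-and-divide step. Your added remark about reconciling the distributional identity with the almost-sure conclusion addresses a point the paper passes over silently, but it does not change the argument.
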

\noindent
We next show that the FNPP $\{W_\beta^{(2)}(t)\}_{t\geq0}$ with Weibull rate, exhibits the LRD property.
\begin{theorem}
The FNPP $\{W_\beta^{(2)}(t)\}_{t\geq0}$ with Weibull rate unction $\Lambda(t)=(t/b)^a,~a,b>0,$ exhibits the LRD property, when $0<a\beta<1.$
\begin{proof}
To investigate the LRD property, we study the asymptotic behavior of the correlation function of the FNPP. Note that the covariance function of the FNPP is given by (see \cite[Proposition 2]{fnhpp}) 
\begin{equation}
\text{Cov}[W_\beta^{(2)}(s),W_\beta^{(2)}(t)]=\mathbb{E}[\Lambda(E_\beta(s))]+\text{Cov}[\Lambda(E_\beta(s)),\Lambda(E_\beta(t))],~s<t.
\end{equation}
We first compute second part of the above equation for the Weibull rate function
\begin{align}
\text{Cov}\left[\left(E_\beta(s)/b\right)^a,\left(E_\beta(t)/b\right)^a\right]&=\frac{1}{b^{2a}}\left[\mathbb{E}\left[E^a_\beta(s)E_\beta^a(t)\right]-\mathbb{E}\left[E^a_\beta(s)\right]\mathbb{E}\left[E_\beta^a(t)\right]\right]\nonumber\\
&=\frac{1}{b^{2a}}\left[\mathbb{E}\left[(st)^{a\beta}E^a_\beta(1)E_\beta^a(1)\right]-\mathbb{E}\left[s^{a\beta}E^a_\beta(1)\right]\mathbb{E}\left[t^{a\beta}E_\beta^a(1)\right]\right]~(\text{using }\eqref{istable-ss})\nonumber\\
&=\frac{(st)^{a\beta}}{b^{2a}}\left[\mathbb{E}\left[E_\beta^{2a}(1)\right]-\left(\mathbb{E}\left[E^a_\beta(1)\right]\right)^2\right]\nonumber\\
&=\frac{(st)^{a\beta}}{b^{2a}}\text{Var}\left[E_\beta^{a}(1)\right].\label{secondpart-LRD}
\end{align}
It follows that 
\begin{align}
\text{Cov}[W_\beta^{(2)}(s),W_\beta^{(2)}(t)]&=\mathbb{E}\left[\left(E_\beta(s)/b\right)^a\right]+\frac{(st)^{a\beta}}{b^{2a}}\text{Var}\left[E_\beta^{a}(1)\right]\nonumber\\
&=\frac{s^{a\beta}}{b^a}\mathbb{E}\left[E^a_\beta(1)\right]+\frac{(st)^{a\beta}}{b^{2a}}\text{Var}\left[E_\beta^{a}(1)\right].\label{cov-fnpp}
\end{align}
Note that the variance of the FNPP (see \cite[eq. (4.5)]{fnhpp}) is given by
\begin{equation*}
\text{Var}[W^{(2)}_\beta(t)]=\mathbb{E}[\Lambda(E_\beta(t))]+\text{Var}[\Lambda(E_\beta(t))].
\end{equation*}
 The variance of the FNPP, in case of the Weibull rate function,  reduces to
\begin{align}
\text{Var}[W^{(2)}_\beta(t)]&=\mathbb{E}\left[\left(E_\beta(t)/b\right)^a\right]+\text{Var}\left[\left(E_\beta(t)/b\right)^a\right]\nonumber\\
&=\frac{t^{a\beta}}{b^a}\mathbb{E}\left[E^a_\beta(1)\right]+\frac{t^{2a\beta}}{b^{2a}}\text{Var}\left[E^a_\beta(1)\right]\nonumber\\
&\sim \frac{t^{2a\beta}}{b^{2a}}\text{Var}\left[E^a_\beta(1)\right],~\text{for large }t.\label{var-asym}
\end{align}
Thus, from \eqref{cov-fnpp} and \eqref{var-asym}, the correlation function between $W^{(2)}_\beta(s)$ and $W_\beta^{(2)}(t)$ for large $t>s$, is
\begin{align*}
\text{Corr}[W^{(2)}_\beta(s),W^{(2)}_\beta(t)]&=\frac{\text{Cov}[W^{(2)}_\beta(s),W^{(2)}_\beta(t)]}{\sqrt{\text{Var}[W_\beta^{(2)}(s)]\text{Var}[W_\beta^{(2)}(t)]}}\nonumber\\
&\sim \frac{\frac{s^{a\beta}}{b^a}\mathbb{E}\left[E^a_\beta(1)\right]+\frac{(st)^{a\beta}}{b^{2a}}\text{Var}\left[E_\beta^{a}(1)\right]}{\frac{t^{a\beta}}{b^{a}}\sqrt{\text{Var}[W_\beta^{(2)}(s)]}\sqrt{\text{Var}\left[E^a_\beta(1)\right]}}\\
&= t^{-a\beta}\frac{s^{a\beta}\mathbb{E}\left[E^a_\beta(1)\right]}{\sqrt{\text{Var}[W_\beta^{(2)}(s)]\text{Var}\left[E^a_\beta(1)\right]}}+\dfrac{s^{a\beta}}{b^a}\sqrt{\frac{\text{Var}\left[E_\beta^{a}(1)\right]}{\text{Var}[W_\beta^{(2)}(s)]}}\\
&=d_1(s)t^{-a\beta}+d_2(s),~\text{(say)}.
\end{align*}
Hence, the correlation function $\text{Corr}[W^{(2)}_\beta(s),W^{(2)}_\beta(t)]$ behaves like $t^{-a\beta}d_1(s)+d_2(s)$, for $0<a\beta<1$ and so the FNPP $\{W^{(2)}_\beta(t)\}_{t\geq0}$ exhibits the LRD property.
\end{proof}
\end{theorem}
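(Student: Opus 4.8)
The plan is to reduce the problem to the second-order structure of the FNPP already available in the literature and then read off the tail behaviour of the correlation. Leonenko et al.\ \cite[Proposition 2]{fnhpp} provide
\begin{equation*}
\text{Cov}[W_\beta^{(2)}(s),W_\beta^{(2)}(t)]=\mathbb{E}[\Lambda(E_\beta(s))]+\text{Cov}[\Lambda(E_\beta(s)),\Lambda(E_\beta(t))],\quad s<t,
\end{equation*}
and \cite[eq.~(4.5)]{fnhpp} gives $\text{Var}[W^{(2)}_\beta(t)]=\mathbb{E}[\Lambda(E_\beta(t))]+\text{Var}[\Lambda(E_\beta(t))]$. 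The first step is simply to substitute the Weibull rate $\Lambda(t)=(t/b)^a$ into these identities, after which everything is expressed through moments of the inverse $\beta$-stable subordinator.

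Next I would compute the three ingredients. The self-similarity \eqref{istable-ss} gives $E_\beta(s)\stackrel{d}{=}s^\beta E_\beta(1)$, hence $\mathbb{E}[\Lambda(E_\beta(s))]=b^{-a}s^{a\beta}\mathbb{E}[E_\beta^a(1)]$. For the cross term $\text{Cov}[(E_\beta(s)/b)^a,(E_\beta(t)/b)^a]$ I would again use \eqref{istable-ss} to factor out the time-scalings, reducing the mixed moment to a moment of $E_\beta(1)$ alone and obtaining a contribution proportional to $(st)^{a\beta}\text{Var}[E_\beta^a(1)]$. Assembling the pieces yields the closed form
\begin{equation*}
\text{Cov}[W_\beta^{(2)}(s),W_\beta^{(2)}(t)]=\frac{s^{a\beta}}{b^a}\mathbb{E}[E_\beta^a(1)]+\frac{(st)^{a\beta}}{b^{2a}}\text{Var}[E_\beta^a(1)],
\end{equation*}
while the same scaling applied to the variance gives $\text{Var}[W^{(2)}_\beta(t)]=b^{-a}t^{a\beta}\mathbb{E}[E_\beta^a(1)]+b^{-2a}t^{2a\beta}\text{Var}[E_\beta^a(1)]\sim b^{-2a}t^{2a\beta}\text{Var}[E_\beta^a(1)]$ as $t\to\infty$.

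Finally I would form $\text{Corr}[W^{(2)}_\beta(s),W^{(2)}_\beta(t)]=\text{Cov}[W^{(2)}_\beta(s),W^{(2)}_\beta(t)]/\sqrt{\text{Var}[W^{(2)}_\beta(s)]\,\text{Var}[W^{(2)}_\beta(t)]}$, insert the variance asymptotics, and collect powers of $t$: for fixed $s$ and large $t$ the correlation takes the form $d_1(s)\,t^{-a\beta}+d_2(s)$ with explicit $d_1(s),d_2(s)>0$, so that it behaves like $t^{-a\beta}$ up to a constant depending on $s$, and Definition~\ref{LRD-definition} with $d=a\beta\in(0,1)$ then yields the LRD property.

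The hard part will be the cross-covariance step: one has to make sure the joint law of $(E_\beta(s),E_\beta(t))$ is treated correctly when invoking the self-similarity \eqref{istable-ss}, and one should verify that $\mathbb{E}[E_\beta^a(1)]$ and $\mathbb{E}[E_\beta^{2a}(1)]$ are finite — which holds because all moments of $E_\beta(1)$ are finite — so that every expression above is well defined. The remaining manipulations are routine bookkeeping with powers of $s$ and $t$.
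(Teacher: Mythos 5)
Your proposal follows essentially the same route as the paper's proof: the same covariance and variance identities from Leonenko et al., the same self-similarity reduction of the cross term to $(st)^{a\beta}\text{Var}[E_\beta^a(1)]/b^{2a}$, the same asymptotics $\text{Var}[W^{(2)}_\beta(t)]\sim b^{-2a}t^{2a\beta}\text{Var}[E_\beta^a(1)]$, and the same final form $d_1(s)t^{-a\beta}+d_2(s)$ for the correlation. The one delicate point you flag --- that the joint law of $(E_\beta(s),E_\beta(t))$ must be handled correctly when invoking the one-dimensional self-similarity \eqref{istable-ss} on a mixed moment --- is precisely the step the paper itself passes over without further justification, so your treatment matches the paper's in both substance and level of rigor.
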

\section{Simulation}\label{sec:Simulation}
\noindent In this section we present  simulated sample paths for some NSTFPP. %We first present the algorithm for simulation of the FPP.

The sample paths of the NTFPP, the NSFPP and the NSTFPP processes are simulated for the Makeham's distribution with cumulative hazard function $\Lambda(t)=(a/b)e^{bt}-(a/b)+\mu t,a,b,\mu>0$ as given in Table \ref{table-example}.
\begin{figure}[!ht]
	\begin{subfigure}{0.5\textwidth}
		\caption{Sample paths of the NTFPP process for $\beta=0.5,a=0.6,b=0.1,\mu=5.0$ and $T=50.$}
		\includegraphics[width=1\textwidth]{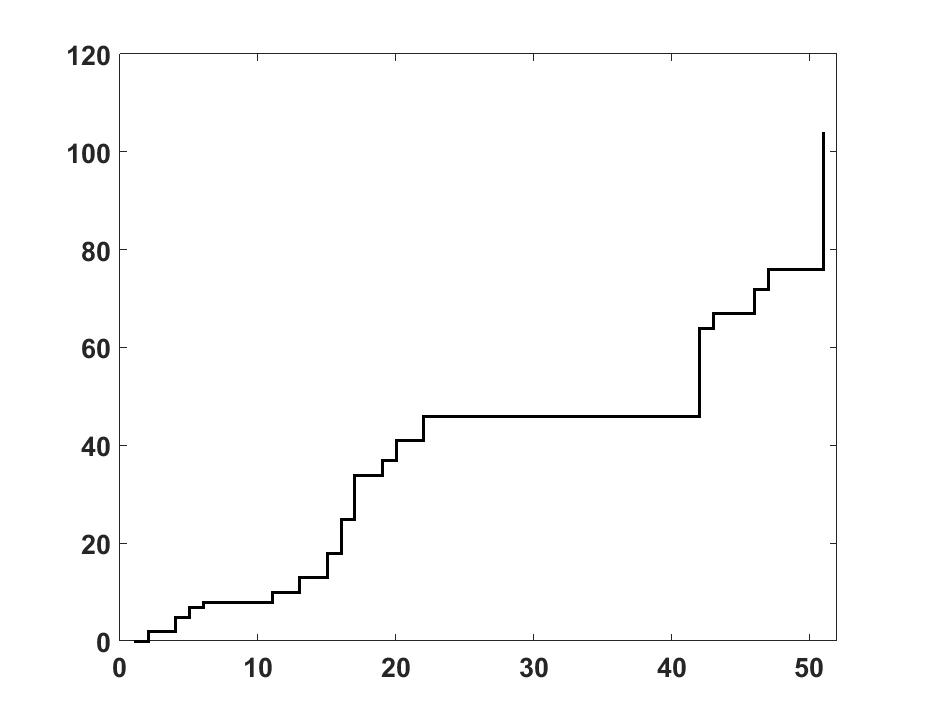}
	\end{subfigure}~
	\begin{subfigure}{0.5\textwidth}
		\caption{Sample paths of the NTFPP process for $\beta=0.9,a=0.6,b=0.1,\mu=5.0$ and $T=50.$}
		\includegraphics[width=1\textwidth]{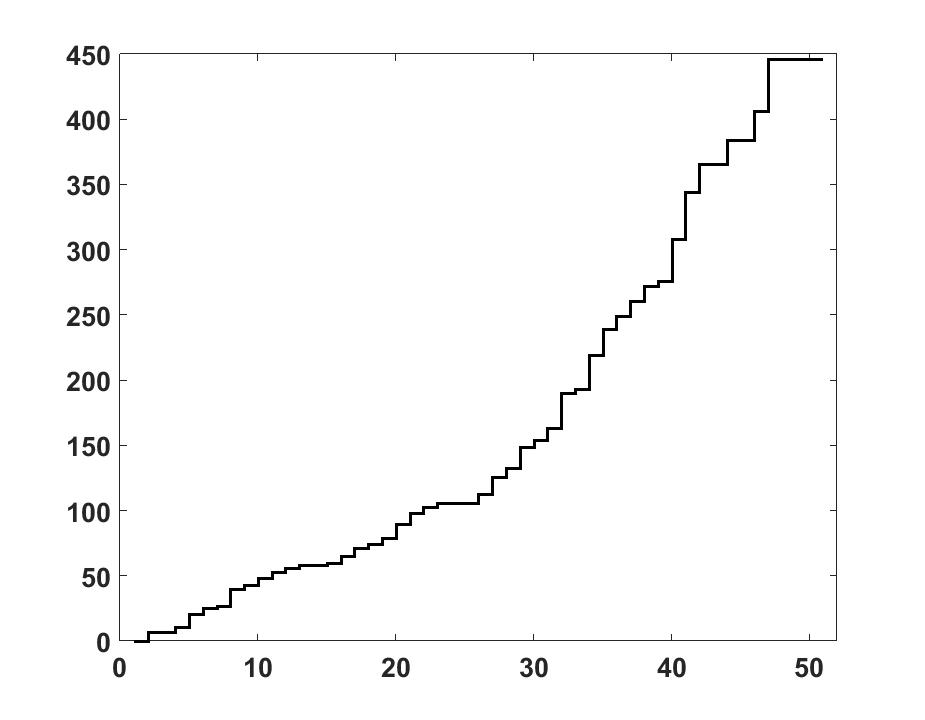}
	\end{subfigure}
	\caption{Sample paths of the NTFPP process}
\end{figure}~
\begin{figure}[H]
	\begin{subfigure}{0.5\textwidth}
		\caption{Sample paths of the NSFPP process for $\alpha=0.5,a=0.6,b=0.1,\mu=5.0$ and $T=50.$}
		\includegraphics[width=1\textwidth]{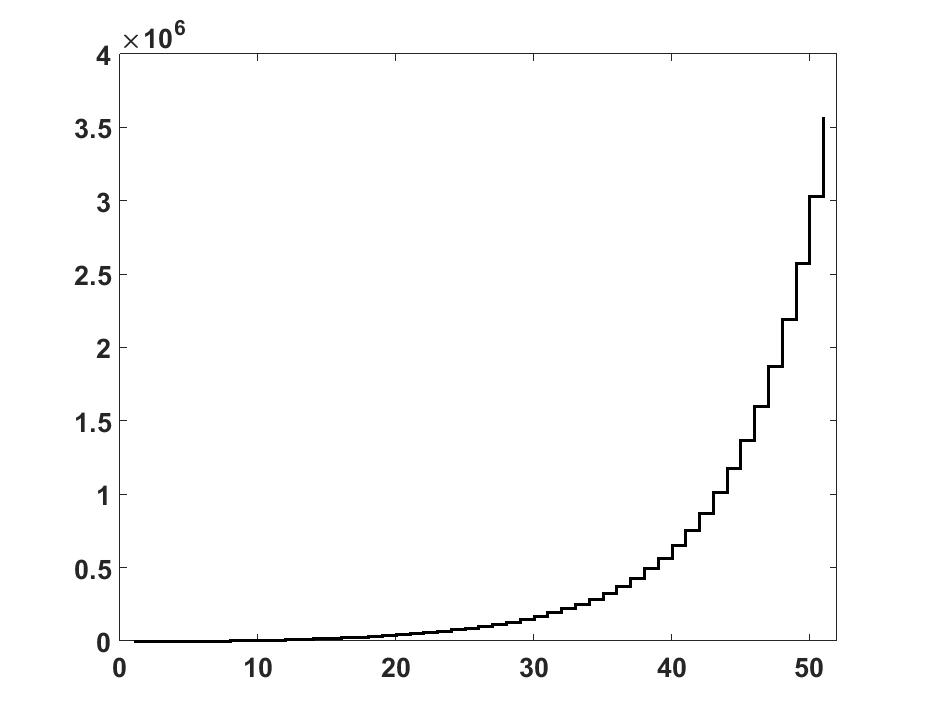}
	\end{subfigure}~
	\begin{subfigure}{0.5\textwidth}
		\caption{Sample paths of the NSFPP process for $\alpha=0.9,a=0.6,b=0.1,\mu=5.0$ and $T=50.$}
		\includegraphics[width=1\textwidth]{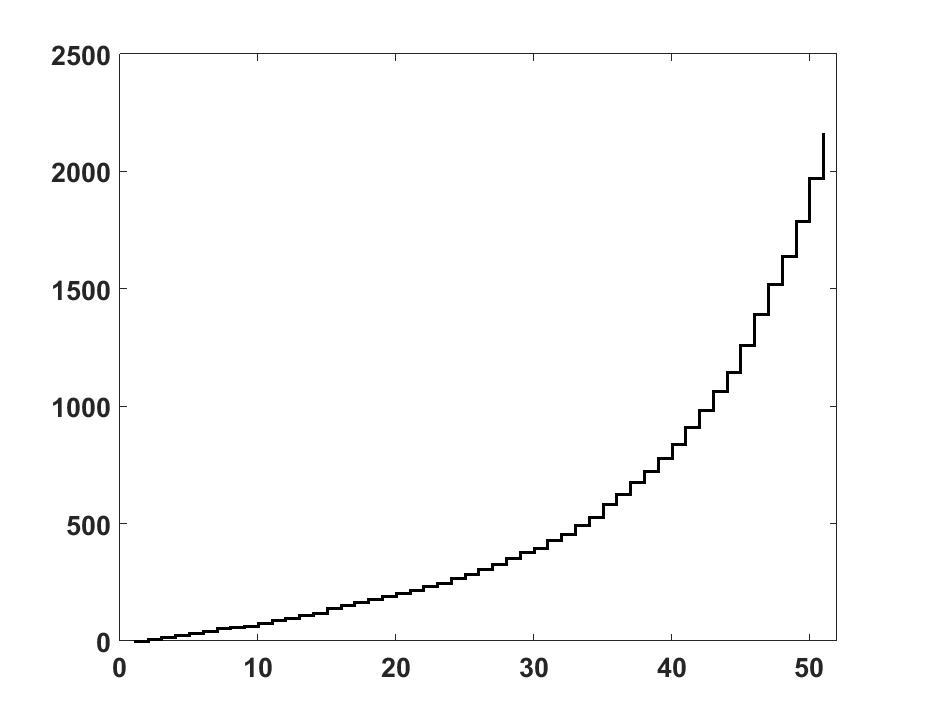}
	\end{subfigure}
	\caption{Sample paths of the NSFPP process}
\end{figure}

\begin{figure}[H]
	\begin{subfigure}{0.5\textwidth}
		\caption{Sample paths of the NSTFPP process for $\alpha=0.8,\beta=0.6,a=0.6,b=0.1,\mu=5.0$ and $T=50.$}
		\includegraphics[width=1\textwidth]{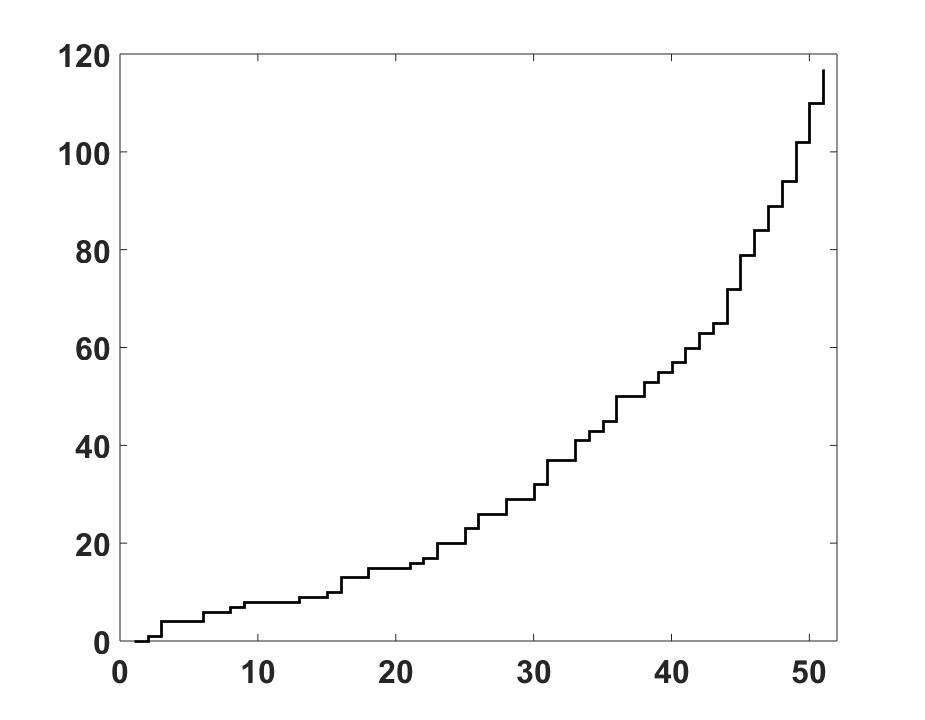}
	\end{subfigure}~
	\begin{subfigure}{0.5\textwidth}
		\caption{Sample paths of the NSTFPP process for $\alpha=0.5,\beta=0.9,a=0.6,b=0.1,\mu=5.0$ and $T=50.$}
		\includegraphics[width=1\textwidth]{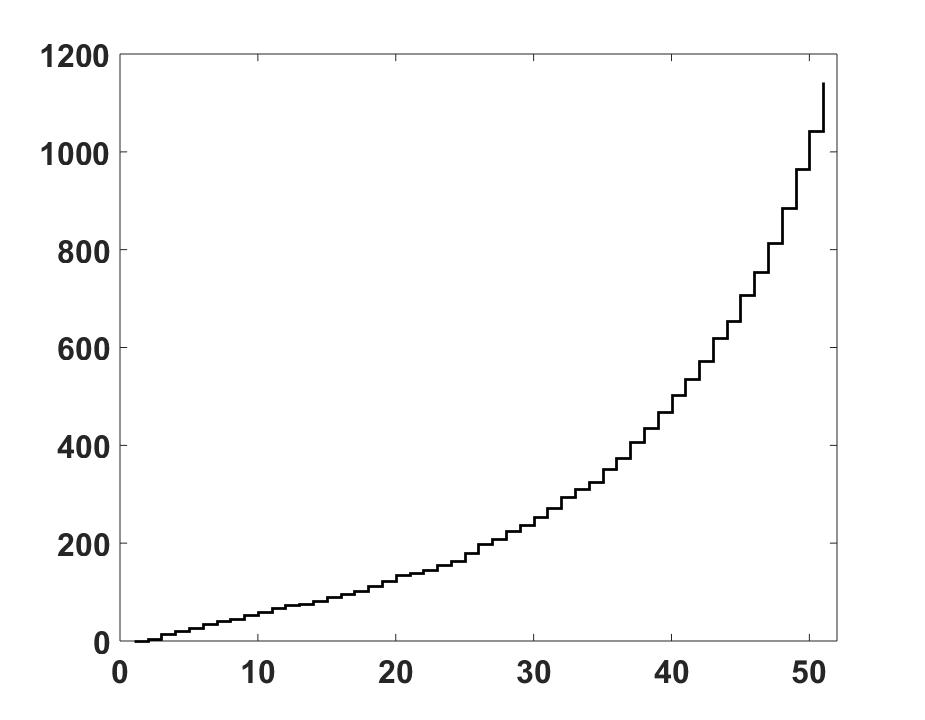}
	\end{subfigure}
	\caption{Sample paths of the NSTFPP process}
\end{figure}

%\clearpage
\bibliographystyle{abbrv}
\bibliography{researchbib}

\def\cprime{$'$}
\begin{thebibliography}{10}

\bibitem{appm}
D.~Applebaum.
\newblock {\em L\'evy Processes and Stochastic Calculus}, volume 116 of {\em
  Cambridge Studies in Advanced Mathematics}.
\newblock Cambridge University Press, Cambridge, second edition, 2009.

\bibitem{Weibull-NHPP-2}
A.~P. Basu and S.~E. Rigdon.
\newblock Ch. 2. {T}he {W}eibull nonhomogeneous {P}oisson process.
\newblock In {\em Advances in Reliability}, volume~20 of {\em Handbook of
  Statistics}, pages 43 -- 68. Elsevier, 2001.

\bibitem{beghinejp2009}
L.~Beghin and E.~Orsingher.
\newblock Fractional {P}oisson processes and related planar random motions.
\newblock {\em Electron. J. Probab.}, 14:no. 61, 1790--1827, 2009.

\bibitem{bertoin}
J.~Bertoin.
\newblock {\em L\'evy processes}, volume 121 of {\em Cambridge Tracts in
  Mathematics}.
\newblock Cambridge University Press, Cambridge, 1996.

\bibitem{bingham71}
N.~H. Bingham.
\newblock Limit theorems for occupation times of {M}arkov processes.
\newblock {\em Z. Wahrscheinlichkeitstheorie und Verw. Gebiete}, 17:1--22,
  1971.

\bibitem{ovi-lrd}
M.~D'Ovidio and E.~Nane.
\newblock Time dependent random fields on spherical non-homogeneous surfaces.
\newblock {\em Stochastic Process. Appl.}, 124(6):2098--2131, 2014.

\bibitem{NHPP-Gompertz}
K.~Hee-Cheul and K.~Kyung-Soo.
\newblock Software development cost model based on {NHPP} {G}ompertz
  distribution.
\newblock {\em Indian Journal of Science and Technology}, 8(12), 2015.

\bibitem{lask}
N.~Laskin.
\newblock Fractional {P}oisson process.
\newblock {\em Commun. Nonlinear Sci. Numer. Simul.}, 8(3-4):201--213, 2003.
\newblock Chaotic transport and complexity in classical and quantum dynamics.

\bibitem{fnhpp}
N.~Leonenko, E.~Scalas, and M.~Trinh.
\newblock The fractional non-homogeneous {P}oisson process.
\newblock {\em Statist. Probab. Lett.}, 120:147--156, 2017.

\bibitem{LRD2014}
N.~N. Leonenko, M.~M. Meerschaert, R.~L. Schilling, and A.~Sikorskii.
\newblock Correlation structure of time-changed l\'evy processes.
\newblock {\em Commun. Appl. Ind. Math.}, 2014.

\bibitem{lrd2016}
A.~Maheshwari and P.~Vellaisamy.
\newblock On the long-range dependence of fractional poisson and negative
  binomial processes.
\newblock {\em J. Appl. Probab.}, 53:989--1000, 2016.

\bibitem{mnv}
M.~M. Meerschaert, E.~Nane, and P.~Vellaisamy.
\newblock The fractional {P}oisson process and the inverse stable subordinator.
\newblock {\em Electron. J. Probab.}, 16:no. 59, 1600--1620, 2011.

\bibitem{MeerSche2004}
M.~M. Meerschaert and H.-P. Scheffler.
\newblock Limit theorems for continuous-time random walks with infinite mean
  waiting times.
\newblock {\em J. Appl. Probab.}, 41(3):623--638, 2004.

\bibitem{MeerStrak13}
M.~M. Meerschaert and P.~Straka.
\newblock Inverse stable subordinators.
\newblock {\em Math. Model. Nat. Phenom.}, 8(2):1--16, 2013.

\bibitem{Mittag-Leffler-original}
G.~Mittag-Leffler.
\newblock Sur la nouvelle fonction $e_\alpha(x)$.
\newblock {\em C. R. Acad. Sci. Paris}, 137:554--558, 1903.

\bibitem{NHPP-Gompertz-2}
K.~Ohishi, H.~Okamura, and T.~Dohi.
\newblock Gompertz software reliability model: Estimation algorithm and
  empirical validation.
\newblock {\em Journal of Systems and Software}, 82(3):535 -- 543, 2009.

\bibitem{sfpp}
E.~Orsingher and F.~Polito.
\newblock The space-fractional {P}oisson process.
\newblock {\em Statist. Probab. Lett.}, 82(4):852--858, 2012.

\bibitem{NHPP-hazard}
Y.~Tae-Hyun.
\newblock The infinite {NHPP} software reliability model based on monotonic
  intensity function.
\newblock {\em Indian Journal of Science and Technology}, 8(14), 2015.

\bibitem{fnbpfp}
P.~Vellaisamy and A.~Maheshwari.
\newblock Fractional negative binomial and {P}olya processes.
\newblock {\em To appear in Probab. Math. Statist.}, 2017.

\bibitem{Weibull-NHPP}
S.~Zacks.
\newblock Distributions of failure times associated with non-homogeneous
  compound {P}oisson damage processes.
\newblock In {\em A festschrift for {H}erman {R}ubin}, volume~45 of {\em IMS
  Lecture Notes Monogr. Ser.}, pages 396--407. Inst. Math. Statist., Beachwood,
  OH, 2004.

\end{thebibliography}

\end{document}